\def \de {\partial}
\def \phi {\varphi}
\def \RN {\mathbb{R}^N}
\def \R {\mathbb{R}}
\def \K {\mathscr{K}}
\def \G{\Gamma}
\newcommand{\Ba}{\mathscr B_z^{(a)}}
\def \vf{\varphi}
\def \So {\mathscr{S}(\R^{N})}
\newcommand{\As}{(-\mathscr A)^s}
\newcommand{\sA}{\mathscr A}
\newcommand{\Bpa}{B^{p,\alpha}\left(\RN\right)}
\newcommand{\Rn}{\mathbb R^n}
\newcommand{\p}{\partial}
\newcommand{\la}{\lambda}
\numberwithin{equation}{section}
\newcommand{\beq}{\begin{equation}}
\newcommand{\bea}[1]{\begin{array}{#1} }
\newcommand{\eeq}{ \end{equation}}
\newcommand{\ea}{ \end{array}}
\newcommand{\ve}{\varepsilon}
\newcommand{\Rnp}{\mathbb R^{N+1}_+}
\newcommand{\Po}{\mathscr P}
\newcommand{\In}{\mathbf 1_E}
\newcommand{\Ia}{\mathscr I_\alpha}
\newtheorem{theorem}{Theorem}[section]
\newtheorem{lemma}[theorem]{Lemma}
\newtheorem{proposition}[theorem]{Proposition}
\newtheorem{corollary}[theorem]{Corollary}
\newtheorem{remark}[theorem]{Remark}
\newtheorem{definition}[theorem]{Definition}
\numberwithin{equation}{section}
\begin{document}

\title[Functional inequalities etc.]{Functional inequalities for a class of nonlocal hypoelliptic equations of H\"ormander type}

\subjclass[2010]{35H10, 35R11, 26D10, 46E99}
\keywords{H\"ormander operators, Kolmogorov equation, fractional powers, Besov spaces, Poincar\'e inequalities}

\date{}

\begin{abstract} 
We consider a class of second-order partial differential operators $\sA$ of H\"ormander type, which contain as a prototypical example a well-studied operator introduced by Kolmogorov in the '30s. We analyze some properties of the nonlocal operators driven by the fractional powers of $\sA$, and we introduce some interpolation spaces related to them. We also establish sharp pointwise estimates of Harnack type for the semigroup associated with the extension operator. Moreover, we prove both global and localised versions of Poincar\'e inequalities adapted to the underlying geometry.
\end{abstract}

\author{Nicola Garofalo}

\address{Dipartimento d'Ingegneria Civile e Ambientale (DICEA)\\ Universit\`a di Padova\\ Via Marzolo, 9 - 35131 Padova,  Italy}
\vskip 0.2in
\email{nicola.garofalo@unipd.it}

\thanks{The first author was supported in part by a Progetto SID (Investimento Strategico di Dipartimento) ``Non-local operators in geometry and in free boundary problems, and their connection with the applied sciences", University of Padova, 2017.}

\author{Giulio Tralli}
\address{Dipartimento d'Ingegneria Civile e Ambientale (DICEA)\\ Universit\`a di Padova\\ Via Marzolo, 9 - 35131 Padova,  Italy}
\vskip 0.2in
\email{giulio.tralli@unipd.it}

\maketitle

\tableofcontents

\section{Introduction}\label{intro}

In our recent works \cite{GT}, \cite{GThls} we have developed a fractional calculus, and established nonlocal functional inequalities of Hardy-Littlewood-Sobolev type, for the following class of second-order partial differential equations of evolution type
\begin{equation}\label{K0}
\mathscr K u \overset{def}{=} \mathscr A u  - \de_t u = 0,
\end{equation}
with diffusive part in the form
\begin{equation}\label{A0}
\mathscr A u \overset{def}{=} \operatorname{tr}(Q \nabla^2 u) + <BX,\nabla u>.
\end{equation}
 Here, we have denoted by $X$ the variable in $\R^N$ ($N\geq 2$), whereas $Q$ and $B$ indicate two given $N\times N$ matrices with real constant coefficients. For a $N\times N$ matrix $A$ the notation $\operatorname{tr} A$ indicates the trace of $A$, $A^*$ the transpose of $A$, $\nabla^2 u$ the Hessian matrix of $u$. 
 
The aim of the present note is to complement the above cited works, as well as our work in preparation \cite{GTiso}, and also establish two results of independent interest.
We remark that when $Q = I_N$ and $B = O_N$ in \eqref{A0}, then $\sA = \Delta$ and \eqref{K0} gives that $\K = \Delta - \p_t$ is the standard heat operator in $\R^{N+1}$. Although we will at times refer to this classical non-degenerate case for comparison or illustrative purposes, our primary focus is the genuinely degenerate setting in which $Q=Q^*\geq 0$, and $B \not= O_N$. In such framework, the class \eqref{K0} encompasses various evolution equations of interest in mathematics and physics. 

Perhaps the best known example dates  back to Kolmogorov's 1934 note \cite{Kol} on Brownian motion and the theory of gases, and it is given by
\[
\K_0 u = \Delta_v u + <v,\nabla_x u> - \p_t u = 0,
\]
where now $N = 2n$, $X = (v,x)$, with $v, x\in \Rn$. Other examples of degenerate equations in the form \eqref{K0} of interest in physics were studied in \cite{Ch}. We emphasise that the operator $\K_0$ badly fails to be parabolic since it is missing the diffusive term $\Delta_x u$. Nonetheless, it is hypoelliptic. This remarkable fact was proved by Kolmogorov himself, who found the following explicit fundamental solution 
\begin{align*}
p_0(X,Y,t) & = \frac{c_n}{t^{2n}} \exp\big\{- \frac 1t \big(|v-w|^2 
 + \frac 3t <v-w,y-x-tv>  + \frac{3}{t^2} |x- y +tv|^2\big)\big\},
\end{align*}
where $Y = (w,y)$. Since $p_0(X,Y,t)$ is obviously smooth off the diagonal, the hypoellipticity of $\K_0$ follows. 
For the probabilistic meaning of $p_0(X,Y,t)$ we refer the reader to the insightful note of D. Stroock \cite{Str}, from which we now quote: ``\emph{Kolmogorov's example stood in isolation until 1967, when H\"ormander \cite{Ho} proved a general theorem that put it in context}". 

The result referred to in this quote is the celebrated hypoellipticity theorem. Specialised to the class \eqref{K0} such result states that $\K$ is hypoelliptic if and only if for every $t>0$ the covariance matrix is invertible, i.e.,  
\begin{equation}\label{Kt}
K(t) \overset{def}{=} \frac 1t \int_0^t e^{sB} Q e^{s B^\star} ds > 0.
\end{equation}
It is well-known that \eqref{Kt} is equivalent to H\"ormander's famous finite rank condition for the operators \eqref{K0} and \eqref{A0}, see \cite{Ho}, but also \cite{Web, Il, Ku, LP, L}. 

One notable feature of the class \eqref{K0} is that the fundamental solution of $\K$ (i.e., the transition probability kernel of $\sA$) is explicit, see \cite{Ho}. This fact has been extensively used, for example, for interior and boundary regularity issues in \cite{Sc, GL, LP, Pcat94, Manfr, KLT, AT}. In the recent works \cite{GThls, GTiso} such fundamental solution was expressed in the following suggestive form
\begin{equation}\label{PtKt}
p(X,Y,t) = \frac{c_N}{V(t)} \exp\left( - \frac{m_t(X,Y)^2}{4t}\right),
\end{equation}
where $c_N^{-1} = 4^{\frac N2} \G(\frac N2+1)$. In \eqref{PtKt} for $X, Y\in \RN$, $t>0$ and $r>0$, we have used the notation
\begin{equation}\label{m}
m_t(X,Y) = \left(<K(t)^{-1}(Y-e^{tB} X ),Y-e^{tB} X >\right)^{\frac{1}{2}},
\end{equation}
for the non-symmetric intertwined pseudo-distance, and 
\begin{equation}\label{pb}
B_t(X,r) = \{Y\in \RN\mid m_t(X,Y) < r\},
\end{equation}
for the corresponding time-dependent pseudo-balls. Also, we have indicated with
\begin{equation}\label{VS}
V(t) = \operatorname{Vol}_N(B_t(X,\sqrt t)) = \omega_N  (\det(t K(t)))^{1/2}
\end{equation}
the \emph{volume function}. By the expression \eqref{PtKt} it should be apparent that such function is bound to play an important role in the analysis of \eqref{K0}. The reader should note that the right-hand side of \eqref{VS} is independent of the point $X\in \RN$. This reflect the underlying Lie group structure first noted in \cite{LP}. It was shown in \cite{Ho} that for $f\in \So$, $u(X,t) = \int_{\RN}p(X,Y,t)f(Y) dY$ is the unique solution of the Cauchy problem $\K u = 0$ in $\RN\times (0,\infty)$, $u(X,0) = f(X)$. Moreover, the formula 
\begin{equation}\label{hs}
P_t f(X) = e^{-t\sA} f(X) =\int_{\RN}p(X,Y,t)f(Y) dY,
\end{equation}
defines a strongly-continuous semigroup $\{P_t\}_{t>0}$ in every $L^p(\RN)$, $1\le p < \infty$, with infinitesimal generator $-\sA$. The same is true for $p=\infty$, if we agree (as we will, henceforth) to replace $L^\infty(\RN)$ with the Banach space $L^\infty_0(\RN)$ of those $f\in C(\RN)$ such that $\underset{|X|\to \infty}{\lim}\ |f(X)| = 0$ with the norm $||\cdot||_\infty$. In fact, $P_t$ is contractive on $L^\infty_0(\RN)$, but it is not so, in general, on $L^p(\RN)$, when $p<\infty$. For a summary of the main known properties of the semigroup \eqref{hs}, we refer the reader to see \cite[Section 2]{GT} and the bibliography therein.

Despite its apparent similarity with the classical Euclidean heat kernel, formula \eqref{PtKt} hides a greater complexity. One aspect of this is the dependence of both the intertwined pseudo-distance $m_t(X,Y)$, and the pseudo-balls $B_t(X,\sqrt t)$, on the time variable $t$. Another difficulty is the drastically different geometry of $B_t(X,\sqrt t)$ depending on the eigenvalues of the matrix $B$. In this connection, we recall that in \cite[Section 3]{GThls} we showed the $L^p-L^\infty$ ultracontractivity of the semigroup. Precisely, for any $1\le p\le \infty$ one has for $f\in L^p(\RN)$,
\begin{equation}\label{uc}
|P_t f(X)| \le \frac{c_{N,p}}{V(t)^{1/p}} ||f||_{p},
\end{equation}
for a certain constant $c_{N,p}>0$. The unfamiliar reader should be aware that in the general framework of \eqref{K0}, \eqref{A0} this property, per se, does not necessarily imply a decay rate of the semigroup. For instance, when $Q = I_N$ and $B = - I_N$, then the operator in \eqref{K0} becomes $\K = \Delta u - <X,\nabla u> - \p_t u$, the classical Ornstein-Uhlenbeck operator. In such case one can see that $V(t)\to c_N>0$ as $t\to \infty$. One of the main ideas in \cite{GThls} was to combine \eqref{uc} with the assumption
\begin{equation}\label{trace}
\operatorname{tr} B \ge 0.
\end{equation} 
Not only \eqref{trace} guarantees that $P_t$ be contractive in $L^p(\RN)$ for $1\le p<\infty$, see \eqref{P:ptp}, but such assumption also determines the large time behaviour of the volume function $V(t)$. In fact, it was shown in \cite[Prop. 3.1]{GThls} that \eqref{trace} implies that $V(t)$ blows up at least linearly at infinity. Furthermore, if at least one of the eigenvalues of $B$ has a positive real part, then $V(t)$ blows up exponentially. In all cases, when \eqref{trace} holds we obtain from \eqref{uc} that $P_t f(X)\to 0$ as $t\to \infty$. This information played a key role in our proofs of the nonlocal Sobolev and isoperimetric inequalities in \cite{GThls}, \cite{GTiso}. For instance, in establishing the fundamental identity
\begin{equation}\label{azz}
f = \mathscr I_{\alpha} \circ (-\sA)^{\alpha/2} f = (-\sA)^{\alpha/2} \circ \Ia f,\ \ \ \ \ 0<\alpha<2,
\end{equation}
which shows that $\Ia = (-\sA)^{-\alpha/2}$. In \eqref{azz} we have let $\As$ denote the nonlocal operator in \eqref{As}, whereas we have indicated with
\[
\Ia f(X) = \frac{1}{\G(\alpha/2)} \int_0^\infty t^{\alpha/2 - 1} P_t f(X) dt,
\]
the potential operators of order $\alpha$.
While we refer to \cite{GThls, GTiso} for the relevant results, in view of their applications it is of interest to further analyse the properties of the nonlocal operators in \cite{GT}. 

This leads us to briefly discuss the main results in this paper.
In Section \ref{sec2} we recall the definition of $\As$ following the classical approach by Balakrishnan, and we show some notable properties both of $\As$ and an extension of \eqref{uc}. In Section \ref{sec3} we introduce a class of hypoelliptic Besov spaces related to $\sA$. We study the mapping properties of the fractional powers $\As$ from these Besov spaces into $L^p$-spaces, with special attention to indicator functions. This latter aspect becomes relevant in connection with the nonlocal isoperimetric inequalities in \cite{GTiso}. In Section \ref{sec4} we discuss two new Poincar\'e inequalities adapted to the underlying geometry of the operators in \eqref{A0}. In Proposition \ref{Poincare} we prove a global Poincar\'e inequality with respect to the Gaussian kernel in \eqref{PtKt}. In Corollary \ref{C:henri} we establish a localised one on the intertwined pseudoballs in \eqref{pb}. In this latter result the order of differentiation is suitably weighted by the covariance matrix $K(t)$ in \eqref{Kt}. Finally, in Section \ref{sec5} we provide a sharp Harnack inequality for the semigroup of the extension operator associated with $\K$, see Theorem \ref{shth}. Such estimate is deduced from an inequality of Li-Yau type established in Lemma \ref{LYKaPt}.


\section{Fractional powers of $\mathscr A$ and ultracontractivity}\label{sec2}

In this section we recall some results from Balakrishnan's seminal papers  \cite{B59, B}, with the purpose of connecting them to our work \cite{GT}. We also establish a $L^p-L^q$ ultracontractive estimate which extends that in \cite{GThls}, and a new representation of the fractional powers $\As$ using the Poisson kernel in \cite{GT, GThls}.
 
\subsection{The fractional calculus of Balakrishnan}\label{SS:bala}
\addtocontents{toc}{\protect\setcounter{tocdepth}{1}}
To provide a motivation for the definition of the fractional powers, we recall some of the pioneering ideas of Balakrishnan. In his work \cite{B} he considered a closed linear operator $A$ with domain and range in a Banach space $X$. He assumed that every $\la>0$ belongs to the resolvent set $\rho(A)$, and that with $R(\la,A) = (\la I - A)^{-1}$, one has for $\la >0$,
\begin{equation}\label{b1}
\la ||R(\la,A)|| \le M.
\end{equation}
Balakrishnan himself pointed out that this assumption does not necessarily imply that $A$ be the generator of a semigroup on $X$. Under the hypothesis \eqref{b1} he defined in formula (2.1) in \cite{B} a linear operator $J^\alpha:D(A)\subset X\to X$ by the formula
\begin{equation}\label{b2}
J^\alpha x = \frac{\sin(\pi \alpha)}{\pi} \int_0^\infty \la^{\alpha-1} R(\la,A)(-A)x\ d\la,\ \ \ \ \ \ \ 0<\Re \alpha <1.
\end{equation}
If we write the integral in the right-hand side of \eqref{b2} as
\[
\int_0^1 \la^{\alpha-1} R(\la,A)(-A)x d\la + \int_1^\infty \la^{\alpha-1} R(\la,A)(-A)x d\la,
\]
one immediately recognises that both integrals are convergent (in the sense of Bochner). The former does since $\Re \alpha >0$, the latter converges since one has $||\la^{\alpha-1} R(\la,A)(-A)x||\le C \la^{\Re \alpha -2}$ on $[1,\infty)$ thanks to \eqref{b1}, and thus convergence is guaranteed by the hypothesis $\Re \alpha<1$.
On p.421-22, Balakrishnan observed that: ``If $A$ does generate a semigroup, these coincide with the previous definitions in \cite{B59}". Let us provide a proof of this statement for the benefit of the unfamiliar reader. Suppose, in addition to \eqref{b1}, that $A$ be the infinitesimal generator of a strongly continuous semigroup $\{T(t)\}_{t>0}$ on $X$. Then, formula \cite[(6.10) in Theor. 6.3]{B59} gives for $x\in D(A)$,
\begin{equation}\label{b3}
A^\alpha x = - \frac{s}{\G(1-s)} \int_0^\infty \frac{1}{t^{1+\alpha}} (T(t) x - x) dt,  \ \ \ \ \ \ 0<\Re \alpha <1.
\end{equation} 
On the other hand, by the well-known integral representation of the resolvent, see e.g. \cite[(i) in Theor. 1.10]{EN}, we have $R(\la,A) x = \int_0^\infty e^{-\la t} T(t) x\ dt$. We can thus rewrite \eqref{b2} as follows
\begin{equation}\label{b4}
J^\alpha x = \frac{\sin(\pi \alpha)}{\pi} \int_0^\infty \int_0^\infty \la^{\alpha-1} e^{-\la t} T(t)(-A)x\ dt d\la,\ \ \ \ \ \ \ 0<\Re \alpha <1.
\end{equation}
Since $\int_0^\infty \la^{\alpha-1} e^{-\la t} d\la = t^{-\alpha} \G(\alpha)$, exchanging the order of integration, we obtain from \eqref{b4}
\begin{equation}\label{b5}
J^\alpha x = \frac{\sin(\pi \alpha)\G(\alpha)}{\pi} \int_0^\infty  t^{-\alpha} T(t)(-A)x\ dt = - \frac{\sin(\pi \alpha)\G(\alpha)}{\pi} \int_0^\infty  t^{-\alpha} A T(t) x\ dt,
\end{equation}
where in the second equality we have used the fact that $A T(t) x = T(t) Ax$ for every $x\in D(A)$. Keeping in mind that for every $x\in D(A)$ we have $\frac{d}{dt} T(t) x = A T(t) x$, we now proceed as follows
\begin{align*}
\int_0^\infty  t^{-\alpha} A T(t) x\ dt = \int_0^\infty  t^{-\alpha} \frac{d}{dt}[T(t) x - x] dt
= \alpha \int_0^\infty  t^{-1 -\alpha} [T(t) x - x] dt.
\end{align*}
We note that in the above integration by parts the boundary terms at $0$ and $\infty$ vanish since for any $x\in D(A)$ we have $||T(t) x - x|| = O(t)$ as $t\to 0^+$, and $||t^{-\alpha} [T(t) x - x]|| \le C t^{-\Re\alpha}$ as $t\to \infty$ (we note that by the theorem of Hille-Yosida the assumption \eqref{b1} implies that $||T(t)||\le M$ for every $t>0$). Substituting the latter identity in \eqref{b5} we find
$$J^\alpha x = - \frac{\alpha \sin(\pi \alpha)\G(\alpha)}{\pi} \int_0^\infty  t^{-1 -\alpha} [T(t) x - x] dt.$$
Keeping the identity $\G(\alpha) \G(1-\alpha) = \frac{\pi}{\sin \pi \alpha}$ in mind (see e.g. \cite[3.123 on p.105]{T}), we finally reach the conclusion that $J^\alpha x = A^\alpha x$, for every $x\in D(A)$, which proves Balakrishnan's comment. Summarizing, we have shown that when $A$ satisfies \eqref{b1} and it is the infinitesimal generator of a strongly continuous semigroup $\{T(t)\}_{t>0}$ on $X$, then for $0<\Re \alpha <1$ one has
\begin{equation}\label{b7}
A^\alpha x = - \frac{s}{\G(1-s)} \int_0^\infty \frac{1}{t^{1+\alpha}} (T(t) x - x) dt = \frac{\sin(\pi \alpha)}{\pi} \int_0^\infty \la^{\alpha-1} R(\la,A)(-A)x\ d\la.
\end{equation}  

As an illustration of \eqref{b7} (and, in fact, this example was the main motivation behind Balakrishnan's formula \eqref{b3}), consider the standard heat semigroup $e^{-t\Delta}$ in $\RN$, with generator $A = -\Delta$. If we denote by $p(X,Y,t) = (4\pi t)^{-\frac N2} \exp(-\frac{|X-Y|^2}{4t})$ the heat kernel, then for any $f\in \So$ one has $e^{-t\Delta} f(X) = \int_{\RN} p(X,Y,t) f(Y) dY$. If we insert this information in \eqref{b3}, and we exchange the order of integration, we find
\begin{align*}
(-\Delta)^s f(X) & = - \frac{s}{\G(1-s)} \int_0^\infty t^{-(1+s)} [e^{-t\Delta} f(X) - f(X)] dt
\\
& = - \frac{s}{\G(1-s)} \int_{\RN} [f(Y) - f(X)] \int_0^\infty t^{-(1+s)} p(X,Y,t) dt dY
\\
& = - \frac{s 2^{2s} \G(\frac N2 + s)}{\pi^{\frac N2} \G(1-s)} \operatorname{PV} \int_{\RN} \frac{f(Y) - f(X)}{|Y-X|^{N+2s}} dY, 
\end{align*}
where in the last integral we have used the well-known identity
\begin{equation}\label{flheat}
- \frac{s}{\G(1-s)} \int_0^\infty t^{-(1+s)} p(X,Y,t) dt = - \frac{s 2^{2s} \G(\frac N2 + s)}{\pi^{\frac N2} \G(1-s)} |X-Y|^{-(N+2s)}.
\end{equation}
We conclude that Balakrishnan's formula \eqref{b3} coincides with M. Riesz' definition in \cite{R} of the fractional Laplacian of a function $f\in \So$ 
$$(-\Delta)^s f(X) = - \frac{s 2^{2s} \G(\frac N2 + s)}{\pi^{\frac N2} \G(1-s)} \operatorname{PV} \int_{\RN} \frac{f(Y) - f(X)}{|Y-X|^{N+2s}} dY,$$
see also \cite{La}, and the survey articles \cite{BV}, \cite{Kwa}, \cite{Gft}, \cite{AV}. 

\subsection{The fractional powers $\As$}
After these preliminaries we return to the semigroup \eqref{hs}. It was shown in \cite{GT} that, under the assumption \eqref{trace} such semigroup possesses all the properties which are needed to the implementation of Balakrishnan's fractional calculus. We recall here the definition of the nonlocal operators $\As$ given in \cite[Definition 3.1]{GT}. 

\begin{definition}\label{D:flheat}
Let $0<s<1$. For any $f\in \mathscr S(\R^N)$ we define for $X\in \RN$,
\begin{align}\label{As}
(-\mathscr A)^s f(X) & =  - \frac{s}{\G(1-s)} \int_0^\infty t^{-(1+s)} \left[P_t f(X) - f(X)\right] dt.
\end{align}
\end{definition}
The previous definition makes a pointwise sense for any drift matrix $B$, and it makes sense also in $L^p$ for any $p\in[1,+\infty]$ if $B$ satisfies \eqref{trace}.
We next collect some known basic properties of the nonlocal operators \eqref{As}, see \cite[Lemmas 2.3, 2.4 and 2.5]{B}. In the next result, when we write $D(\sA)$ we mean the domain of the infinitesimal generator of the semigroup $P_t$ over the Banach space $L^\infty_0(\RN)$.
 
\begin{proposition}\label{P:bala2}
The following properties hold:
\begin{itemize}
\item[(i)] for any $f\in D(\sA)$ such that $\sA f\in \overline{D(\sA)}$, one has $\underset{s\to 1^-}{\lim} \As f = - \sA f$;
\item[(ii)] for any $f\in D(\sA)$ such that $\la R(\la,\sA)\to 0$ as $\la\to 0^+$, one has $\underset{s\to 0^+}{\lim} \As f = - f$;
\item[(iii)] let $s, s'\in (0,1)$ and suppose that $s+s'\in (0,1]$. Then, for any $f\in D(\sA^2)$ one has
\[
(-\mathscr A)^{s+s'} f = (-\mathscr A)^s \circ (\mathscr A)^{s'} f.
\]
\end{itemize}
\end{proposition}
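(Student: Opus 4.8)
The plan is to follow Balakrishnan's original arguments, since the statement is essentially a transcription of \cite[Lemmas 2.3, 2.4, 2.5]{B} into our concrete setting; the only thing one needs to check is that the semigroup $\{P_t\}_{t>0}$ on $X = L^\infty_0(\RN)$ satisfies the abstract hypotheses under which those lemmas were proved, and for this we invoke the analysis of \cite{GT}: under \eqref{trace} the semigroup is strongly continuous and contractive on $L^\infty_0(\RN)$, so $\|R(\la,\sA)\| \le \la^{-1}$ for all $\la>0$, which is hypothesis \eqref{b1} with $M=1$. With this in hand, each item is a short computation on the integral representations \eqref{b7} of $\As$.

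For (i), I would start from the resolvent form $\As f = \frac{\sin(\pi s)}{\pi}\int_0^\infty \la^{s-1} R(\la,\sA)(-\sA f)\,d\la$. Write $-\sA f = \la R(\la,\sA)(-\sA f) + R(\la,\sA)\sA^2 f$ — valid since $f\in D(\sA)$ and $\sA f\in \overline{D(\sA)}$ lets one approximate — so that, using $\frac{\sin(\pi s)}{\pi}\int_0^\infty \la^{s-2}\,d\la$ type identities after a suitable splitting, one isolates a term converging to $-\sA f$ and a remainder controlled by $\|R(\la,\sA)\sA^2 f\|$ which, multiplied by $\sin(\pi s) \to 0$, vanishes as $s\to 1^-$. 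The condition $\sA f\in\overline{D(\sA)}$ is exactly what makes the boundary term at $s=1$ behave; concretely one can also run the argument from the difference-quotient form \eqref{As}, splitting $\int_0^1 + \int_1^\infty$ and using $\|P_tf - f\|\le t\|\sA f\| + o(t)$ near $0$.

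For (ii), again use the resolvent representation and the elementary identity $\frac{\sin(\pi s)}{\pi}\int_0^\infty \la^{s-1}(\la+1)^{-1}\,d\la = \frac{\sin(\pi s)}{\pi}\cdot\frac{\pi}{\sin(\pi s)} = 1$ (or rather its analogue with the resolvent): writing $\As f = -\frac{\sin(\pi s)}{\pi}\int_0^\infty \la^{s-1}\big(f - \la R(\la,\sA) f\big)\,d\la$, one checks that $-\frac{\sin(\pi s)}{\pi}\int_0^\infty \la^{s-1}\cdot(\text{the }f\text{ part})$ contributes $-f$ in the limit, while the $\la R(\la,\sA)f$ piece goes to $0$ because of the hypothesis $\la R(\la,\sA)f\to 0$ as $\la\to 0^+$ together with the uniform bound $\|\la R(\la,\sA)\|\le 1$ for large $\la$; the factor $\sin(\pi s)\to 0$ kills the large-$\la$ tail. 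For (iii), the semigroup/additivity law, one works on $D(\sA^2)$ where all operators in sight are bounded on the relevant orbits, inserts the integral representation of $(-\sA)^{s'} f$ into that of $(-\sA)^s$ applied to it, and exchanges the order of the two $\la$-integrals (justified by Fubini–Bochner using the decay estimates from \eqref{b1} and $f\in D(\sA^2)$); the resulting double integral is evaluated by the Beta-function identity $\int_0^\infty \int_0^\infty \la^{s-1}\mu^{s'-1}(\cdots)\,d\la\,d\mu$ reducing to the single integral defining $(-\sA)^{s+s'}$, which is where the constraint $s+s'\le 1$ enters to guarantee convergence at infinity. The main obstacle in all three parts is purely technical: justifying the interchange of limits/integrals and the vanishing of boundary contributions, which requires carefully pairing the hypotheses (membership in $D(\sA)$, $D(\sA^2)$, or the decay of $\la R(\la,\sA)f$) with the a priori bound \eqref{b1}; there is no conceptual difficulty beyond bookkeeping, and indeed one may simply cite \cite[Lemmas 2.3--2.5]{B} once \eqref{b1} is verified for $P_t$ on $L^\infty_0(\RN)$.
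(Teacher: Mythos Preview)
Your proposal is correct and matches the paper's approach exactly: the paper does not give a proof at all, but simply records the proposition as a direct consequence of \cite[Lemmas 2.3, 2.4 and 2.5]{B}, having previously observed that $P_t$ is a strongly continuous contractive semigroup on $L^\infty_0(\RN)$ so that Balakrishnan's hypothesis \eqref{b1} holds. Your sketch goes further than the paper by outlining the actual mechanics of Balakrishnan's arguments, but the underlying strategy---verify \eqref{b1} and invoke \cite{B}---is identical; one minor remark is that contractivity of $P_t$ on $L^\infty_0(\RN)$ does not require the trace assumption \eqref{trace} (see the discussion after \eqref{hs}), so you need not impose it here.
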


\subsection{Ultracontractivity}\label{SS:ultra}

The role of the number $\operatorname{tr} B$ in the spectral properties of the semigroup $\{P_t\}_{t>0}$ can be seen in the following estimate valid for any $1\le p \le \infty$, and any $t>0$ (see, e.g, \cite[Lemma 2.4, (iv)]{GT}),
\begin{equation}\label{P:ptp}
||P_t ||_{L^p(\R^N)\to L^p(\R^N)} \le e^{- t \frac{\operatorname{tr} B}p}.
\end{equation}
Moreover, we proved in \cite[Proposition 3.5]{GThls} that for any $1\leq p< \infty$ the following $L^p\to L^\infty$ ultracontractivity holds
\begin{equation}\label{L:Koneinfty}
||P_t ||_{L^p(\R^N)\to L^\infty(\R^N)}\le \frac{c_{N,p}}{V(t)^{\frac 1p}},
\end{equation}
for a certain constant $c_{N,p}>0$. In the next proposition we generalise formulas \eqref{P:ptp}-\eqref{L:Koneinfty} by establishing the following $L^p\to L^q$ ultracontractivity of the semigroup $\{P_t\}_{t>0}$.
\begin{proposition}\label{P:ptq}
For every $1\leq p< \infty$ and $q\geq p$,  we have $P_t: L^p(\R^N)\to L^q(\R^N)$ for any $t>0$, with
\begin{equation}\label{tuttenorme}
||P_t ||_{L^p(\R^N)\to L^q(\R^N)} \le \frac{C(N,p,q)}{V(t)^{\frac{1}{p}-\frac{1}{q}}}e^{- t \frac{\operatorname{tr} B}{q}},
\end{equation}
for some constant $C(N,p,q)>0$.
\end{proposition}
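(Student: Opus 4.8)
The plan is to interpolate between the two known endpoint estimates. We already have from \eqref{L:Koneinfty} the bound $\|P_t\|_{L^p\to L^\infty}\le c_{N,p}\,V(t)^{-1/p}$ for $1\le p<\infty$, and from \eqref{P:ptp} the bound $\|P_t\|_{L^q\to L^q}\le e^{-t\operatorname{tr}B/q}$ for $1\le q\le\infty$. The Riesz--Thorin interpolation theorem, applied to the fixed operator $P_t$, will then furnish the $L^p\to L^q$ estimate with the correct product structure. Concretely, first I would fix $1\le p<\infty$ and $q\ge p$, and write $\frac1q=\frac{\theta}{p}+\frac{1-\theta}{p}\cdot 0$-type relation; more precisely one chooses $\theta\in[0,1]$ so that $\frac1q=\frac{1-\theta}{p}$, i.e. $\theta=1-\frac pq\in[0,1)$, interpolating between the pair $(L^p\to L^\infty)$ (weight $1-\theta$, giving $\frac{1-\theta}{p}$ on the target reciprocal) and $(L^p\to L^p)$ (weight $\theta$). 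Wait — to land in $L^q$ with source $L^p$ one wants the \emph{source} exponent fixed at $p$ and the \emph{target} to move from $\infty$ to $p$; Riesz--Thorin allows exactly this, interpolating $P_t\colon L^p\to L^{q_0}$ and $P_t\colon L^p\to L^{q_1}$ with $q_0=\infty$, $q_1=p$.

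The main computation is then bookkeeping of the constants. With $\theta=1-p/q$ one gets
\[
\|P_t\|_{L^p\to L^q}\le \|P_t\|_{L^p\to L^\infty}^{\,1-\theta}\,\|P_t\|_{L^p\to L^p}^{\,\theta}
\le \left(\frac{c_{N,p}}{V(t)^{1/p}}\right)^{p/q}\left(e^{-t\operatorname{tr}B/p}\right)^{1-p/q}.
\]
Now $\frac1p\cdot\frac pq=\frac1q$, so the volume factor becomes $V(t)^{-1/q}$, which is \emph{not} quite $V(t)^{-(1/p-1/q)}$ as claimed. This indicates the interpolation should instead be set up between $L^p\to L^\infty$ and $L^1\to L^1$, or one should first apply \eqref{L:Koneinfty} with a different base exponent. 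The cleaner route: interpolate $P_t\colon L^1\to L^\infty$ (from \eqref{L:Koneinfty} with $p=1$, giving norm $\le c_{N,1}V(t)^{-1}$) against $P_t\colon L^p\to L^p$ (norm $\le e^{-t\operatorname{tr}B/p}$) — but the source spaces differ, so one really needs a two-parameter interpolation, i.e. apply Riesz--Thorin with both source and target exponents varying: interpolate the pair $(L^p\to L^p)$ and $(L^1\to L^\infty)$. Choosing $\theta$ with $\frac1{\text{source}}=\frac{1-\theta}{p}+\frac{\theta}{1}$ and $\frac1{\text{target}}=\frac{1-\theta}{p}+\frac{\theta}{\infty}=\frac{1-\theta}{p}$, one sees that forcing the source to equal $p$ demands $\theta=0$, which is degenerate. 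Hence the right choice is to interpolate between $(L^1\to L^\infty)$ with norm $c_{N,1}V(t)^{-1}$ and $(L^q\to L^q)$ with norm $e^{-t\operatorname{tr}B/q}$: then source reciprocal is $\frac{1-\theta}{1}+\frac{\theta}{q}$, target reciprocal is $\frac{0\cdot(1-\theta)}{1}+\frac{\theta}{q}=\frac{\theta}{q}$. Setting target $=q$ gives $\theta=1$, again degenerate. The genuinely correct interpolation triple is: fix $q$, interpolate $P_t\colon L^q\to L^q$ against $P_t\colon L^1\to L^\infty$; the source exponents $1,q$ interpolate to $p$ via $\frac1p=\frac\theta1+\frac{1-\theta}q$ and the target exponents $\infty,q$ interpolate to $\frac{1-\theta}q$; one then checks $\frac1p-\frac1q=\theta(1-\frac1q)=\frac{1-\theta}{?}$... the exponent on $V(t)$ coming out is $\theta=\frac{1/p-1/q}{1-1/q}$, not $1/p-1/q$ — unless one uses a sharper endpoint.

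So the actual strategy, which I expect the authors use, is to \textbf{avoid relying only on} \eqref{L:Koneinfty} and instead \emph{re-derive} the $L^p\to L^q$ bound directly from the Gaussian kernel estimate $p(X,Y,t)\le \frac{c_N}{V(t)}e^{-m_t(X,Y)^2/(4t)}$ in \eqref{PtKt} together with Young's convolution inequality in the Lie-group structure. Precisely: $P_tf=f*_{\text{(group)}}p_t$ in the appropriate sense, and Young's inequality gives $\|P_tf\|_q\le \|p_t\|_r\|f\|_p$ with $\frac1q=\frac1p+\frac1r-1$, i.e. $\frac1r=1-(\frac1p-\frac1q)$. Then one must estimate $\|p_t\|_r^r=\int |p(X,Y,t)|^r dY$: using \eqref{PtKt}, this is $\frac{c_N^r}{V(t)^r}\int e^{-r\,m_t(X,Y)^2/(4t)}\,dY$, and the substitution diagonalising $K(t)$ shows $\int e^{-r\,m_t(X,Y)^2/(4t)}dY = r^{-N/2}\,(\det(4tK(t)))^{1/2}\cdot(\text{const})= c\, r^{-N/2} V(t)$. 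Hence $\|p_t\|_r \le C\, V(t)^{(1/r)-1}=C\,V(t)^{-(1/p-1/q)}$, which produces exactly the claimed volume power. The exponential factor $e^{-t\operatorname{tr}B/q}$ is then recovered by the same mechanism as in the proof of \eqref{P:ptp}: the identity $\int_{\RN}p(X,Y,t)\,dX=e^{-t\operatorname{tr}B}$ (reflecting that $P_t^*\mathbf 1=e^{-t\operatorname{tr}B}$, equivalently the Jacobian of $X\mapsto e^{tB}X$), interpolated against the contraction on $L^\infty$, yields the $L^q$-normalisation weight; one inserts it by writing the convolution estimate more carefully, tracking that the total mass of $p_t$ in the variable being integrated is $e^{-t\operatorname{tr}B}$ rather than $1$, and distributing this factor as $(e^{-t\operatorname{tr}B})^{1/q}$ via the interpolation $\|p_t\|_r\le\|p_t\|_1^{1-(1/p-1/q)}\|p_t\|_\infty^{1/p-1/q}$ with $\|p_t\|_1=e^{-t\operatorname{tr}B}$ and $\|p_t\|_\infty\le c_N/V(t)$.

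The main obstacle I anticipate is \textbf{making the convolution/Young step rigorous in the non-symmetric, time-dependent group setting}: unlike the classical heat kernel, here $p(X,Y,t)$ is not a function of $Y-X$ but involves $Y-e^{tB}X$, and the relevant convolution is with respect to the Lie group law on $\RN$ under which $\{P_t\}$ is a convolution semigroup (as noted after \eqref{VS}, following \cite{LP}). One must verify that $P_tf(X)=\int p(X,Y,t)f(Y)dY$ really is a group-convolution $f\mapsto f*\mu_t$, identify the correct reference (Haar) measure — which is just Lebesgue measure by unimodularity — and confirm that $\|p_t\|_{L^r}$ computed against Lebesgue measure is the quantity entering Young's inequality. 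Given the explicit Gaussian form, the Gaussian integral $\int e^{-r\,m_t(X,Y)^2/(4t)}dY = r^{-N/2}(2\sqrt{\pi})^{N}(\det(tK(t)))^{1/2}/\omega_N\cdot V(t)$-type evaluation is routine once one changes variables $Y-e^{tB}X=\sqrt{tK(t)}\,Z$; the only care needed is that this substitution has Jacobian independent of $X$, which is precisely the translation-invariance reflected in \eqref{VS}. With these points settled, the estimate \eqref{tuttenorme} follows, and it simultaneously recovers \eqref{P:ptp} (take $q=p$, $r=1$) and \eqref{L:Koneinfty} (take $q=\infty$, $r=p'$), confirming the proposition is the natural common generalisation.
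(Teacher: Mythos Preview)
Your final approach---computing the $L^r$-norm of the Gaussian kernel and invoking a Young-type inequality---is correct in spirit and is essentially what the paper does, but the paper's execution is more direct and sidesteps your main worry entirely. Rather than appealing to Young's inequality for group convolutions (and fretting over the Lie-group law and Haar measure), the paper simply computes
\[
\left(\int_{\RN} p(X,Y,t)^r\,dX\right)^{1/r}=\frac{c_{N,r}}{V(t)^{1-1/r}}\,e^{-t\operatorname{tr}B/r},
\]
which is independent of $Y$ by the change of variables $X\mapsto Y-e^{tB}X$ (whose Jacobian $e^{t\operatorname{tr}B}$ is precisely the source of the exponential factor---note the integral is in the \emph{output} variable $X$). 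Minkowski's integral inequality then gives the endpoint $\|P_t\|_{L^1\to L^r}\le c\,V(t)^{-(1-1/r)}e^{-t\operatorname{tr}B/r}$. The paper interpolates this against the already available bound $\|P_t\|_{L^{r'}\to L^\infty}\le c\,V(t)^{-(1-1/r)}$ from \eqref{L:Koneinfty} (no exponential here, since that estimate comes from integrating the kernel in $Y$). Riesz--Thorin with $\tfrac1p=(1-\lambda)+\tfrac{\lambda}{r'}$ and $\tfrac1q=\tfrac{1-\lambda}{r}$ then yields the volume power $V(t)^{-(1-1/r)}=V(t)^{-(1/p-1/q)}$ and the exponential $e^{-(1-\lambda)t\operatorname{tr}B/r}=e^{-t\operatorname{tr}B/q}$.

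Two remarks on your write-up. First, the failed interpolation schemes at the beginning are instructive but unnecessary once one has the $L^1\to L^r$ endpoint with the correct constant; the paper goes straight there. Second, your bookkeeping of the exponential factor at the end is imprecise: it is essential to distinguish $\int p(X,Y,t)\,dY=1$ from $\int p(X,Y,t)\,dX=e^{-t\operatorname{tr}B}$, and to use the $L^r(dX)$-norm (not $L^r(dY)$) to feed Minkowski. Your proposed interpolation of $\|p_t\|_r$ between $\|p_t\|_1$ and $\|p_t\|_\infty$ would give the wrong power on the exponential unless you carefully separate these two integrals; the paper's route avoids this by computing $\|p(\cdot,Y,t)\|_r$ exactly in one step.
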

\begin{proof}
Let $r\ge 1$ be arbitrarily fixed at this moment. If $f\in L^1(\RN)$, then Minkowski's integral inequality gives
\begin{align*}
& \left(\int_{\RN} |P_t f(X)|^r dX\right)^{1/r} = \left(\int_{\RN} \left|\int_{\RN} p(X,Y,t) f(Y) dY\right|^r dX\right)^{1/r}
\\
& \le \int_{\RN} |f(Y)| \left(\int_{\RN} p(X,Y,t)^r dX\right)^{1/r} dY = ||f||_1 \left(\int_{\RN} p(X,Y,t)^r dX\right)^{1/r} dY.
\end{align*} 
We can compute explicitly the $L^r$-norm of $p(\cdot,Y,t)$. In fact, using the expression \eqref{PtKt} and \cite[Lemma 2.1, (2.3)]{GThls}, we find
$$\left(\int_{\RN} p(X,Y,t)^r dX\right)^{1/r}=\frac{c_{N,r}}{V(t)^{1-\frac{1}{r}}}e^{- t \frac{\operatorname{tr} B}{r}}.$$
This shows that
\[
P_t : L^1(\RN)\ \longrightarrow\ L^r(\RN),
\]
with
\[
||P_t||_{L^1(\R^N)\to L^r(\R^N)} \le \frac{\tilde{c}_{N,r}}{V(t)^{1-\frac{1}{r}}}e^{- t \frac{\operatorname{tr} B}{r}}
\]
for some $\tilde{c}_{N,r}>0$. On the other hand, we know from \eqref{L:Koneinfty} that
\[
P_t : L^{r'}(\RN)\ \longrightarrow\ L^\infty(\RN)\qquad\mbox{with}\qquad||P_t||_{L^{r'}\to L^\infty} \le \frac{c_{N,r'}}{V(t)^{\frac{1}{r'}}}=\frac{c_{N,r'}}{V(t)^{1-\frac{1}{r}}}.
\]
Let now $1\le p \le q$ be fixed, and choose $r\ge 1$ such that $r'\ge p$. Then, there exists $\la\in [0,1]$ such that
\[
\frac 1p = \frac{1-\la}{1} + \frac{\la}{r'} = 1 - \frac{\la}{r}.
\] 
In other words, we are taking $\frac{\la}r = 1-\frac 1p = \frac{1}{p'}$. By the Riesz-Thorin interpolation theorem 
\[
P_t : L^p(\RN) \ \longrightarrow\ L^q(\RN),
\]
with 
\[
\frac 1q = \frac{1-\la}r + \frac{\la}{\infty} = \frac 1r - \frac{\la}r = \frac 1p + \frac 1r - 1.
\] 
Moreover, since $\frac{1-\la}{r} = \frac 1q$ and $1 - \frac{\la}{r}=\frac 1p$, we have
\[
||P_t||_{L^p \to L^q} \le \left(\frac{\tilde{c}_{N,r}}{V(t)^{1-\frac{1}{r}}}e^{- t \frac{\operatorname{tr} B}{r}}\right)^{1-\la} \left(\frac{c_{N,r'}}{V(t)^{1-\frac{1}{r}}}\right)^{\la}=\frac{C(N,p,q)}{V(t)^{\frac{1}{p}-\frac{1}{q}}}e^{- t \frac{\operatorname{tr} B}{q}},
\]
where $C(N,p,q)=\tilde{c}^{1-\la}_{N,r}c^\la_{N,r'}$. We have thus reached the desired conclusion \eqref{tuttenorme}.
\end{proof}

We close this section by providing the following alternative expression of the nonlocal operator $\As$ based on the Poisson semigroup $\Po_z = e^{z \sqrt{-\sA}}$, $z>0$. We recall from \cite{GT} that the Poisson semigroup is defined by
\begin{equation}\label{altrep1}
\Po_z f(X) = \frac{1}{\sqrt{4\pi}} \int_0^\infty \frac{z}{t^{3/2}} e^{-\frac{z^2}{4t}} P_t f(X) dt.
\end{equation}

\begin{proposition}\label{P:Aspoisson}
Let $0<s<1/2$. For $f\in \So$ we have 
\begin{equation}\label{b8}
\As f(X)  = - \frac{2s}{\G(1-2s)} \int_0^\infty \frac{1}{z^{1+2s}} [\Po_z f(X) - f(X)] dz.
\end{equation}
\end{proposition}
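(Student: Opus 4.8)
The plan is to imitate the classical subordination argument that relates $\sqrt{-\sA}$ to $-\sA$, and to reuse the Balakrishnan identity already recorded in \eqref{b7}. First I would observe that the Poisson semigroup $\{\Po_z\}_{z>0}$ is the subordinated semigroup of $\{P_t\}_{t>0}$ via the one-sided stable $\tfrac12$-subordinator: this is exactly the content of \eqref{altrep1}. Its infinitesimal generator is $-\sqrt{-\sA}$, and under hypothesis \eqref{trace} the operator $\sqrt{-\sA}$ inherits from $\sA$ all the resolvent bounds needed for Balakrishnan's calculus (for instance $\lambda\|R(\lambda,-\sqrt{-\sA})\|\le M$ for $\lambda>0$, since $\|\Po_z\|_{L^p\to L^p}\le 1$ for $1\le p<\infty$ by subordination and \eqref{P:ptp} with $\operatorname{tr}B\ge 0$). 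Applying \eqref{b7} to the operator $A=-\sqrt{-\sA}$ with exponent $2s\in(0,1)$ (this is where the restriction $0<s<1/2$ enters) gives
\[
\left(\sqrt{-\sA}\right)^{2s} f = -\frac{2s}{\G(1-2s)}\int_0^\infty \frac{1}{z^{1+2s}}\big[\Po_z f(X)-f(X)\big]\,dz .
\]
The remaining point is the identification $\big(\sqrt{-\sA}\big)^{2s}=(-\sA)^s$, which is the semigroup property of fractional powers; on the Schwartz class this can be seen directly through the subordination formula, or invoked from the composition rule in Proposition \ref{P:bala2}(iii) together with the fact that $\sqrt{-\sA}=(-\sA)^{1/2}$.

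Alternatively, and perhaps more transparently for $f\in\So$, one can prove \eqref{b8} by a direct computation starting from the right-hand side: substitute \eqref{altrep1} into the integral, write $\Po_z f(X)-f(X)=\tfrac{1}{\sqrt{4\pi}}\int_0^\infty \tfrac{z}{t^{3/2}}e^{-z^2/4t}\big(P_t f(X)-f(X)\big)\,dt$ (using that $\tfrac{1}{\sqrt{4\pi}}\int_0^\infty \tfrac{z}{t^{3/2}}e^{-z^2/4t}\,dt=1$ to insert the $-f(X)$ correctly), exchange the order of the $z$- and $t$-integrations by Fubini, and compute the inner $z$-integral
\[
\int_0^\infty \frac{1}{z^{1+2s}}\cdot\frac{z}{t^{3/2}}\,e^{-z^2/4t}\,dz
 = \frac{1}{t^{3/2}}\int_0^\infty z^{-2s}e^{-z^2/4t}\,dz
 = c_s\, t^{-1-s},
\]
by the change of variable $z^2/4t=u$ and the definition of the Gamma function; tracking the constant $c_s$ and combining it with the factor $-\tfrac{2s}{\G(1-2s)}/\sqrt{4\pi}$ should reproduce precisely $-\tfrac{s}{\G(1-s)}$, after using the duplication formula $\G(1-2s)=\tfrac{2^{-2s}}{\sqrt\pi}\G(1-s)\G(\tfrac12-s)$ and $\G(s)\G(\tfrac12-s)$–type identities; the resulting expression is then exactly Definition \ref{D:flheat} of $\As f(X)$.

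The main obstacle, in either route, is the justification of the interchange of integrals and of the cancellation near $z=0$ (equivalently $t=0$). For $f\in\So$ one has $\|P_t f-f\|_\infty=O(t)$ as $t\to 0^+$ and $\|P_t f-f\|_\infty=O(1)$ as $t\to\infty$, so the double integral $\int_0^\infty\!\int_0^\infty z^{-1-2s}\tfrac{z}{t^{3/2}}e^{-z^2/4t}|P_tf(X)-f(X)|\,dt\,dz$ converges absolutely precisely when $0<2s<1$, i.e. $0<s<1/2$; this is what licenses Fubini and makes the boundary term at $z=0$ disappear. Once absolute convergence is in hand, the computation above is routine, and the constant bookkeeping via the Gamma-function reflection and duplication formulas (as already used in the excerpt around \eqref{flheat}) finishes the proof.
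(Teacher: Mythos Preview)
Your ``alternative'' direct computation is exactly the paper's proof: substitute \eqref{altrep1}, use $\tfrac{1}{\sqrt{4\pi}}\int_0^\infty \tfrac{z}{t^{3/2}}e^{-z^2/4t}\,dt=1$ to bring in the $-f(X)$, swap the $z$ and $t$ integrals (legitimate for $0<s<1/2$ by the absolute convergence you describe), evaluate the inner $z$-integral via $u=z^2/4t$ to produce $\G(1/2-s)$, and then reduce the constants with the duplication formula $2^{-2s}\G(1-s)\G(1/2-s)=\sqrt\pi\,\G(1-2s)$ to recover $-\tfrac{s}{\G(1-s)}$.

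Your first route, applying Balakrishnan's formula \eqref{b7} directly to the generator $-\sqrt{-\sA}$ of $\{\Po_z\}$ with exponent $2s$, is a perfectly valid conceptual shortcut that the paper does not take. It has the advantage of making the restriction $0<s<1/2$ transparent (it is simply $0<2s<1$ in \eqref{b7}), but it trades the explicit Fubini/Gamma computation for an appeal to the law of exponents $\big((-\sA)^{1/2}\big)^{2s}=(-\sA)^s$; note that Proposition~\ref{P:bala2}(iii) as stated gives $(-\sA)^{s}\circ(-\sA)^{s'}=(-\sA)^{s+s'}$ rather than iterated powers, so for that identification you should cite the general Balakrishnan result $(A^\alpha)^\beta=A^{\alpha\beta}$ directly rather than the proposition in the paper.
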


\begin{proof}
To verify \eqref{b8},
we have from \eqref{altrep1}
\begin{align*}
\int_0^\infty \frac{1}{z^{1+2s}} [\Po_z f(X) - f(X)] dz = 
\int_0^\infty \frac{1}{z^{1+2s}} \frac{1}{\sqrt{4\pi}} \int_0^\infty \frac{z}{t^{3/2}} e^{-\frac{z^2}{4t}} [P_t f(X)-f(X)]\ dt dz,
\end{align*}
where we have used the fact that 
\[
\frac{1}{\sqrt{4\pi}} \int_0^\infty \frac{z}{t^{3/2}} e^{-\frac{z^2}{4t}}\ dt = \frac{1}{\sqrt{\pi}} \int_0^\infty \frac{z}{\sqrt{4t}} e^{-\frac{z^2}{4t}}\ \frac{dt}t = 1.
\]
Suppose now that $0<s<1/2$. Exchanging the order of integration in the above integral, we find
\begin{align*}
& \int_0^\infty \frac{1}{z^{1+2s}} [\Po_z f(X) - f(X)] dz = \frac{1}{\sqrt{4\pi}} \int_0^\infty \frac{1}{t^{3/2}} [P_t f(X)-f(X)] \int_0^\infty z^{1-2s} e^{-\frac{z^2}{4t}} \frac{dz}z dt
\\
& = \frac{1}{4\sqrt{\pi}} \int_0^\infty \frac{1}{t^{3/2}} [P_t f(X)-f(X)] \int_0^\infty (4tu)^{1/2-s} e^{-u} \frac{du}u dt
\\
& = \frac{1}{2^{1+2s}\sqrt{\pi}} \int_0^\infty \frac{1}{t^{1+s}} [P_t f(X)-f(X)] \int_0^\infty u^{1/2-s} e^{-u} \frac{du}u dt
\\
& = \frac{\G(1/2 - s)}{2^{1+2s}\sqrt{\pi}} \int_0^\infty \frac{1}{t^{1+s}} [P_t f(X)-f(X)] dt = - \frac{\G(1/2 - s)\G(1-s)}{2s\ 2^{2s}\sqrt{\pi}} \As f(X).
\end{align*}
Using the formula
\[
2^{2z-1} \G(z) \G(z+\frac 12) = \sqrt \pi \G(2z),
\]
with $z = 1-s$, we obtain
\[
2^{-2s} (1 - 2s) \G(1-s) \G(1/2 -s) = 2^{1-2s} \G(1-s) \G(1-s+1/2) = \sqrt \pi \G(1-2s+1) = \sqrt \pi (1-2s)\G(1-2s),
\]
which gives
\[
2^{-2s} \G(1-s) \G(1/2 -s) = \sqrt \pi \G(1-2s).
\]
Substituting in the above equation, we conclude that \eqref{b8} is valid.
\end{proof}


\section{Interpolation spaces and fractional powers of $\mathscr A$}\label{sec3}

In this section we introduce a class of hypoelliptic Besov spaces which are tailored on the operator $\mathscr A$. By this we mean that, as we show in Proposition \ref{P:besov} below, for every $p\ge 1$ the fractional powers $(-\mathscr A)^s$ continuously map the Besov space $B^{p,\alpha}(\RN)$ into $L^p(\RN)$, provided that $\alpha >2s$ and that \eqref{trace} hold. We then turn our attention to the Besov seminorm of indicator functions, which is relevant for the theory of nonlocal perimeters developed in \cite{GTiso}.

To motivate our definition we recall the classical Sobolev-Besov spaces, see e.g. \cite{JW}, the monograph \cite{DGN} and the references therein. When $\alpha>0$ and $1\le p<\infty$ the space $B^p_\alpha(\RN) = B^{p,p}_\alpha(\RN)$ is the collection of all functions $f\in L^p(\RN)$ such that the seminorm
\begin{equation}\label{normabesovnorma}
\mathscr N^\Delta_{p,\alpha}(f) = \left(\int_{\RN} \int_{\RN} \frac{|f(X) - f(Y)|^p}{|X-Y|^{N+\alpha p}} dX dY\right)^{1/p} < \infty.
\end{equation}
Seminorms of this sort were considered by Slobedetzky \cite{Ni}, Aronszajn \cite{A} and Gagliardo \cite{Ga}.
Using \eqref{flheat} with $2s = \alpha p$, with the aid of the classical heat kernel $p(X,Y,t)$, we can express the condition $\mathscr N^\Delta_{p,\alpha}(f)<\infty$ in the alternative form
\begin{equation}\label{alt1}
\left(\int_0^\infty \frac{1}{t^{\frac{\alpha p}2}} \int_{\RN} \int_{\RN} p(X,Y,t) |f(X) - f(Y)|^p dX dY \frac{dt}t\right)^{\frac 1p} < \infty.
\end{equation}
Keeping in mind that $e^{-t\Delta} f(X) = \int_{\RN} p(X,Y,t) f(Y) dY$, we see that \eqref{alt1} can be equivalently formulated as follows:
 \begin{equation}\label{alt2}
\left(\int_0^\infty \frac{1}{t^{\frac{\alpha p}2}} \int_{\RN} e^{-t\Delta}\left(|f - f(Y)|^p\right)(Y) dY \frac{dt}t\right)^{\frac 1p} < \infty.
\end{equation}
Concerning \eqref{alt2}, we recall that the first mathematician to introduce a characterisation of the Besov spaces via the Poisson or the Gauss-Weierstrass kernels was M. Taibleson in his seminal work \cite{T1}, see also \cite{T2}. 
 
Having observed \eqref{alt1} and \eqref{alt2} for the classical case, we now return to the setting of \eqref{A0} and use the semigroup \eqref{hs} to introduce the relevant definition.

\begin{definition}\label{D:besov}
For $p\geq 1$ and $\alpha\geq 0$, we define the \emph{Besov space} $\Bpa$ as the collection of those functions $f\in L^p(\RN)$, such that the seminorm
\begin{equation}\label{defBnorm}
\mathscr N_{p,\alpha}(f) = \left(\int_0^\infty  \frac{1}{t^{\frac{\alpha p}2}} \int_{\RN} P_t\left(|f - f(Y)|^p\right)(Y) dY \frac{dt}t\right)^{\frac 1p} < \infty.
\end{equation}
We endow the space $\Bpa$ with the following norm
$$||f||_{\Bpa} \overset{def}{=} ||f||_{L^p(\RN)} + \mathscr N_{p,\alpha}(f).$$
\end{definition}

\begin{remark}\label{R:01}
Suppose that \eqref{trace} hold. Then, for any $p\ge 1$ and $\alpha>0$ the condition $\mathscr N_{p,\alpha}(f)<\infty$ is equivalent to 
\[
\tilde{\mathscr N}_{p,\alpha}(f) = \left(\int_0^1  \frac{1}{t^{\frac{\alpha p}2}} \int_{\RN} P_t\left(|f - f(Y)|^p\right)(Y) dY \frac{dt}t\right)^{\frac 1p} < \infty.
\]
\end{remark}
To see the remark, assume that $\tilde{\mathscr N}_{p,\alpha}(f)<\infty$. 
Then, from \eqref{P:ptp} and $P_t1=1$, we have
\begin{align*}
& \left(\int_1^\infty  \frac{1}{t^{\frac{\alpha p}2}} \int_{\RN} P_t\left(|f - f(Y)|^p\right)(Y) dY \frac{dt}t\right)^{\frac 1p}
\\
& \le 2^{1-\frac 1p} \left(\int_1^\infty  \frac{1}{t^{\frac{\alpha p}2}} \int_{\RN} \left(P_t\left(|f|^p\right)(Y) + |f(Y)|^p P_t 1(Y)\right)dY \frac{dt}t\right)^{\frac 1p}
\\
& \le 2^{1-\frac 1p} \left(\int_1^\infty  \frac{1}{t^{\frac{\alpha p}2}} \left(e^{-t \operatorname{tr} B} \int_{\RN} |f(Y)|^p dY+ \int_{\RN} |f(Y)|^p dY\right) \frac{dt}t\right)^{\frac 1p}
\\
& \le 2 ||f||_{L^p(\RN)} \left(\int_1^\infty  \frac{dt}{t^{1+\frac{\alpha p}2}} \right)^{\frac 1p},
\end{align*}
where we have used \eqref{trace}. This proves that there exists a constant $C(p,\alpha)>0$ such that
\begin{equation}\label{alt}
\mathscr N_{p,\alpha}(f) \le C \left(\tilde{\mathscr N}_{p,\alpha}(f) + ||f||_p\right) < \infty,
\end{equation}
which implies the desired conclusion.

The following result establishes the mapping properties of the fractional powers of $\sA$ from a Besov space $B^{p,\alpha}(\RN)$ into $L^p(\RN)$ (in this respect, see also \cite[Lemma 4.3]{GThls}).

\begin{proposition}\label{P:besov}
Assume \eqref{trace}, and let $0<s<1$. For $p>1$ and $\alpha > 2s$, we have
\begin{equation}\label{mappingpowers}
\As: B^{p,\alpha}\left(\RN\right) \to L^p\left(\RN\right).
\end{equation}
When $p=1$, we have for $\alpha \geq 2s$,
\begin{equation}\label{mappingpowerspugu1}
\As: B^{1,\alpha}\left(\RN\right) \to L^1\left(\RN\right).
\end{equation}
\end{proposition}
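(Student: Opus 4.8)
The plan is to bound $\|\As f\|_{L^p(\RN)}$ directly from the Balakrishnan representation \eqref{As}, splitting the $t$-integral at $t=1$ and controlling each piece by a quantity dominated by $\mathscr N_{p,\alpha}(f)$ (or $\tilde{\mathscr N}_{p,\alpha}(f)$, using Remark \ref{R:01}) plus $\|f\|_{L^p}$. The key pointwise observation is that, since $P_t$ is an honest averaging operator against the probability kernel $p(X,Y,t)$ (recall $P_t 1 = 1$), one has
\begin{equation*}
|P_t f(X) - f(X)| = \left|\int_{\RN} p(X,Y,t)\,(f(Y) - f(X))\,dY\right| \le \left(P_t\big(|f - f(X)|^p\big)(X)\right)^{1/p}
\end{equation*}
by Jensen's inequality when $p>1$ (and trivially with equality-type bound when $p=1$). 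Writing $g(X) = \big(P_t(|f - f(X)|^p)(X)\big)^{1/p}$, this gives $|P_t f(X) - f(X)| \le g(X)$ pointwise in $X$ for each fixed $t$.

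The main step is then a Minkowski-type estimate in $L^p$. For $p>1$ I would write, using \eqref{As} and the pointwise bound above,
\begin{equation*}
\|\As f\|_{L^p(\RN)} \le \frac{s}{\G(1-s)} \int_0^\infty t^{-(1+s)} \left\| \big(P_t(|f-f(\cdot)|^p)(\cdot)\big)^{1/p}\right\|_{L^p(\RN)} dt = \frac{s}{\G(1-s)} \int_0^\infty t^{-(1+s)} A_p(t)^{1/p}\, dt,
\end{equation*}
where $A_p(t) = \int_{\RN} P_t(|f - f(Y)|^p)(Y)\,dY$ is exactly the inner integral appearing in \eqref{defBnorm}. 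To pass from this to $\mathscr N_{p,\alpha}(f)$, I would use that the measure $t^{-\alpha p/2}\,\frac{dt}{t}$ is the natural one for the Besov seminorm and apply Hölder's inequality in $t$ to the integral $\int_0^1 t^{-(1+s)} A_p(t)^{1/p}\,dt = \int_0^1 t^{\alpha/2 - s - 1} \cdot \big(t^{-\alpha p/2} A_p(t)\big)^{1/p}\,\frac{dt}{t^{1/p'}\cdot t^{0}}$ — more precisely, write $t^{-(1+s)}A_p(t)^{1/p} = \big(t^{-\alpha p/2}A_p(t)\,t^{-1}\big)^{1/p}\cdot t^{\alpha/2 - s - 1 + 1/p}$ and apply Hölder with exponents $p$ and $p'$; the residual power-of-$t$ factor is integrable near $0$ precisely because $\alpha/2 - s - 1 + 1/p$ times $p'$ exceeds $-1$, which reduces to $\alpha > 2s$. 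On $[1,\infty)$ one controls $A_p(t)$ crudely by $C\|f\|_{L^p}^p$ using \eqref{P:ptp} and \eqref{trace} exactly as in the proof of Remark \ref{R:01}, and the $t$-integral converges since $s>0$. Combining the two ranges yields $\|\As f\|_{L^p} \le C(N,p,s,\alpha)\,\|f\|_{B^{p,\alpha}(\RN)}$, which is \eqref{mappingpowers}.

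For the endpoint $p=1$ with $\alpha \ge 2s$, Jensen is replaced by Tonelli/Fubini: $\|\As f\|_{L^1} \le \frac{s}{\G(1-s)}\int_0^\infty t^{-(1+s)} \int_{\RN} P_t(|f - f(Y)|)(Y)\,dY\,dt$ directly, with no Hölder step needed, so the near-zero integrability of $\int_0^1 t^{\alpha/2 - s - 1}\,\frac{dt}{t}$-weighted seminorm only requires $\alpha/2 - s \ge 0$, i.e. $\alpha \ge 2s$; the large-$t$ piece is handled as before. I expect the main obstacle to be bookkeeping the exponents in the Hölder step so that the condition on $\alpha$ comes out sharp ($\alpha > 2s$ for $p>1$, $\alpha \ge 2s$ for $p=1$), and verifying that the interchange of $\|\cdot\|_{L^p}$ with the $t$-integral (Minkowski's integral inequality) and the Tonelli interchanges are all justified — this is where one uses that $f\in\So$ so that $P_t f - f$ decays suitably, together with the already-established finiteness of $\mathscr N_{p,\alpha}(f)$ and $\|f\|_{L^p}$.
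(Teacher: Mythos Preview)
Your proposal is correct and follows essentially the same route as the paper: Minkowski's integral inequality to pull the $L^p$-norm inside the $t$-integral, the Jensen/H\"older bound $\|P_t f - f\|_p \le A_p(t)^{1/p}$ (the paper's inequality \eqref{hol}), a split at $t=1$ with the tail controlled via \eqref{P:ptp} and \eqref{trace}, and for $p>1$ a H\"older step in $t$ on $(0,1)$ whose convergence forces $\alpha>2s$ (for $p=1$ the direct comparison $t^{-1-s}\le t^{-1-\alpha/2}$ on $(0,1]$ gives $\alpha\ge 2s$). The only cosmetic difference is that the paper bounds $\|P_tf-f\|_p$ before integrating in $X$, whereas you first apply Jensen pointwise and then take the $L^p$-norm; these are equivalent.
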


\begin{proof}
Let $p\geq 1$ and $f\in B^{p,\alpha}\left(\RN\right)$. We notice that
$$X\longmapsto \frac{-s}{\G(1-s)} \int_0^\infty t^{-1-s}\left(P_tf(X) - f(X)\right)\,{\rm d}t=: \As f(X)$$
is a measurable function. Furthermore,
\begin{align*}
\left\|\As f\right\|_p &\leq \frac{s}{\G(1-s)} \int_0^\infty t^{-1-s}\left\|P_tf - f\right\|_p\,{\rm d}t \\
& \leq \int_0^1 t^{-1-s}\left\|P_tf - f\right\|_p\,{\rm d}t + \int_1^\infty t^{-1-s}\left\|P_tf - f\right\|_p\,{\rm d}t.
\end{align*}
If \eqref{trace} holds, we have
\[
\int_1^\infty t^{-1-s}\left\|P_tf - f\right\|_p\,{\rm d}t \le 2 ||f||_p \int_1^\infty t^{-1-s} dt = \frac 2s ||f||_p.
\]
Therefore, in view of Remark \ref{R:01}, in order to establish \eqref{mappingpowers}, \eqref{mappingpowerspugu1}, it suffices to bound the integral $\int_0^1 t^{-1-s}\left\|P_tf - f\right\|_p\,{\rm d}t$ in terms of $\tilde{\mathscr N}_{\alpha,p}(f)$. With this objective in mind, let us observe that H\"older inequality and the fact that $P_t1=1$, give for any $p\ge 1$,
\begin{equation}\label{hol}
\left\|P_tf - f\right\|_p\leq \left(\int_{\RN} P_t\left(|f - f(Y)|^p\right)(Y) dY\right)^{\frac{1}{p}}.
\end{equation}
Now, if $p=1$ and $\alpha\geq 2s$, we find
\begin{align*}
& \int_0^1 t^{-1-s}\left\|P_tf - f\right\|_1 dt \leq \int_0^1 \frac{t^{\frac{\alpha }{2}-s}}{t^{\frac{\alpha }{2}}} \int_{\RN} P_t\left(|f - f(Y)|\right)(Y) dY\frac{dt}{t}
\\
& \le \int_0^1 \frac{1}{t^{\frac{\alpha }{2}}} \int_{\RN} P_t\left(|f - f(Y)|\right)(Y) dY\frac{dt}{t}  = \tilde{\mathscr N}_{\alpha,1}(f).
\end{align*}
This proves \eqref{mappingpowerspugu1}. If instead $p>1$, for every $\alpha>2s$ we obtain from H\"older inequality and \eqref{hol},
\begin{align*}
& \int_0^1 t^{-1-s}\left\|P_tf - f\right\|_p dt \leq \left( \int_0^1 t^{-1-\left(s-\frac{\alpha}{2}\right)p'}dt\right)^{\frac{1}{p'}}\left(\int_0^1 t^{-1-\frac{\alpha p}{2}} \int_{\RN} P_t\left(|f - f(Y)|^p\right)(Y) dY dt\right)^{\frac{1}{p}}
\\
& = C(p,\alpha, s) \tilde{\mathscr N}_{\alpha,p}(f).
\end{align*}
In view of \eqref{alt}, this proves \eqref{mappingpowers}.

\end{proof}

\subsection{Besov spaces and indicator functions}\label{SS:bi}
We now take a closer look at the cases $p=1$ and $p =2$ of Definition \ref{D:besov}, with particular attention to indicator functions $f=\mathbf 1_E$ of measurable sets $E\subset \RN$. For any $1\le p<\infty$ and $0<s<1$ we denote by 
$$D_{p,s} = \{f\in L^p(\RN)\,:\, \As f \in L^p(\RN)\},$$
the domain of $\As$ in $L^p(\RN)$. Throughout this section, we use the notation $C(s) = \frac{s}{\G(1-s)}>0$.

\begin{proposition}\label{L:L1As}
Let $s\in (0,1)$ and $E\subset \RN$ be a measurable set such that $\mathbf 1_E\in D_{1,s}$. Then,
\begin{align}\label{besovind}
||(-\sA)^s \In||_{L^1(\RN)} & = C(s) \int_0^\infty{\frac{1}{t^{1+s}}\int_{\RN}{P_t(|\In-\In(X)|)(X)dX}dt}
\\
& = C(s) \int_0^\infty{\frac{1}{t^{1+s}}\int_{\RN}{P_t(|\In-\In(X)|^2)(X)dX}dt}.
\notag
\end{align}
\end{proposition}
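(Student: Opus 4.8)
The plan is to unwind the definition \eqref{As} of $\As$ applied to $f = \In$, observe that for such an $f$ the quantity $\left|\In(Y) - \In(X)\right|$ equals its own square (it only takes the values $0$ and $1$), and then justify bringing the absolute value outside the time-integral by a sign argument. Concretely: since $\In\in D_{1,s}$ by hypothesis, the function $X\mapsto \As\In(X)$ lies in $L^1(\RN)$, and by \eqref{As} it is represented as $-C(s)\int_0^\infty t^{-(1+s)}[P_t\In(X) - \In(X)]\,dt$. The first step is to show that the integrand $P_t\In(X) - \In(X)$ has a \emph{constant sign in $t$} for each fixed $X$: indeed, writing $P_t\In(X) - \In(X) = \int_{\RN} p(X,Y,t)(\In(Y) - \In(X))\,dY$ and noting that $\In(Y) - \In(X) = -\In(X)$ when $Y\notin E$ and $= 1-\In(X)$ when $Y\in E$, one sees that if $X\in E$ then $\In(Y)-\In(X)\le 0$ for a.e.\ $Y$, hence $P_t\In(X)-\In(X)\le 0$ for all $t$; and if $X\notin E$ then $\In(Y)-\In(X)\ge 0$ for a.e.\ $Y$, hence $P_t\In(X)-\In(X)\ge 0$ for all $t$. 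In either case $\left|P_t\In(X) - \In(X)\right| = \pm(P_t\In(X)-\In(X))$ with the sign independent of $t$.

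With this sign information in hand, the second step is to compute the $L^1$ norm. For $X$ fixed we have
\[
\left|\As\In(X)\right| = C(s)\left|\int_0^\infty t^{-(1+s)}\big(P_t\In(X) - \In(X)\big)\,dt\right| = C(s)\int_0^\infty t^{-(1+s)}\big|P_t\In(X) - \In(X)\big|\,dt,
\]
the last equality because the integrand keeps a constant sign in $t$. Now integrate in $X$ over $\RN$ and apply Tonelli's theorem (everything is nonnegative) to exchange the $dX$ and $dt$ integrations:
\[
\|\As\In\|_{L^1(\RN)} = C(s)\int_0^\infty \frac{1}{t^{1+s}}\int_{\RN}\big|P_t\In(X) - \In(X)\big|\,dX\,dt.
\]
Finally, since $P_t1 = 1$ we may write $P_t\In(X) - \In(X) = P_t(\In - \In(X))(X)$, and by Jensen's inequality (or simply $|\int p\,(\cdot)| \le \int p\,|\cdot|$, using $p(X,\cdot,t)\ge 0$ with total mass $1$) we get $\big|P_t\In(X) - \In(X)\big| \le P_t(|\In - \In(X)|)(X)$. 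Conversely the reverse estimate $\|P_t(|\In-\In(X)|)(X)\|$ is controlled by the same sign argument: when $\In(Y)-\In(X)$ has a constant sign in $Y$, one in fact has $\big|P_t\In(X)-\In(X)\big| = P_t(|\In-\In(X)|)(X)$ pointwise. This yields the first displayed equality in \eqref{besovind}. For the second equality, observe $|\In - \In(X)|^2 = |\In - \In(X)|$ pointwise (the function $\In - \In(X)$ takes values in $\{-1,0,1\}$), so $P_t(|\In - \In(X)|^2)(X) = P_t(|\In - \In(X)|)(X)$ and the two integrals coincide.

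The only genuine subtlety — and the step I would be most careful about — is the \emph{constant-sign-in-$t$} claim, which is what allows the absolute value and the $t$-integral to commute; this rests on $p(X,Y,t)\ge 0$ and $P_t\In(X)$ being a genuine average of the values $\{0,1\}$ of $\In$, hence it stays between the two ``extreme" outcomes forced by $\In(X)$. One should also note that the finiteness of all the iterated integrals is guaranteed a priori by the hypothesis $\In\in D_{1,s}$, so no separate convergence argument is needed beyond invoking Tonelli once the sign is fixed; it might be worth recording that without $\In\in D_{1,s}$ the right-hand side could be $+\infty$, which is precisely why that hypothesis appears in the statement. Everything else is the identity $|x|=|x|^2$ for $x\in\{-1,0,1\}$ and bookkeeping with $P_t1=1$.
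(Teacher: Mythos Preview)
Your proof is correct and follows essentially the same route as the paper: both arguments hinge on the observation that for fixed $X$ the quantity $\In(Y)-\In(X)$ has a constant sign in $Y$ (equivalently, the paper splits $\RN$ into $E$ and $\RN\setminus E$), which lets the absolute value pass through both the $t$-integral and the $P_t$-average, after which Tonelli and the identity $|\In-\In(X)|^2=|\In-\In(X)|$ finish the job. The paper additionally records the intermediate identity $\|P_t\In-\In\|_{L^1}=\int_{\RN}P_t(|\In-\In(X)|)(X)\,dX$ as a separate formula, but this is exactly your pointwise equality $|P_t\In(X)-\In(X)|=P_t(|\In-\In(X)|)(X)$ integrated in $X$.
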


\begin{proof}
Let us first observe that, since $P_t1=1$, one has
\begin{align}\label{altL1}
||P_t\In - \In||_{L^1(\RN)} &= \int_E\int_{\RN\smallsetminus E}p(X,Y,t)dYdX  + \int_{\RN\smallsetminus E}\int_{E}p(X,Y,t)dYdX 
\\
& =  \int_{\RN}{P_t(|\In-\In(X)|)(X)dX}.
\notag
\end{align}
From \eqref{As} and \eqref{altL1} we now have
\begin{align*}
&||(-\sA)^s \In||_{L^1(\RN)}= C(s) \int_{\RN}\left|\int_0^\infty\frac{1}{t^{1+s}}\left(P_t\In(X)-\In(X)\right)dt\right|dX\\
&= C(s) \left(\int_{E}\int_0^\infty\frac{1}{t^{1+s}}(1-P_t\In(X))dtdX + \int_{\RN\setminus E}\int_0^\infty\frac{1}{t^{1+s}}P_t\In(X)dtdX\right)
\\
& = C(s) \int_0^\infty \frac{1}{t^{1+s}} ||P_t\In - \In||_{L^1(\RN)} dt
\\
&= C(s) \int_0^\infty{\frac{1}{t^{1+s}}\int_{\RN}{P_t(|\In-\In(X)|)(X)dX}dt}
\\
&= C(s) \int_0^\infty{\frac{1}{t^{1+s}}\int_{\RN}{P_t(|\In-\In(X)|^2)(X)dX}dt}.
\end{align*}
\end{proof}

\begin{remark}\label{R:alteAs}
Suppose that $\In \in D_{1,s}$. We note explicitly the following alternative expression that follows from \eqref{besovind} and  \eqref{altL1}, 
\[
||(-\sA)^s \In||_{L^1(\RN)} =  C(s) \int_0^\infty \frac{1}{t^{1+s}} ||P_t\In - \In||_{L^1(\RN)} dt.
\]
\end{remark} 

The next result provides a basic characterisation, in terms of the Besov spaces $\Bpa$, for membership of an indicator function in the domain of $\As$ in $L^1(\RN)$. We stress that the next proposition is not true for general functions.

\begin{corollary}\label{P:besovchar}
Let $E\subset \RN$ be measurable. The following are equivalent:
\begin{itemize}
\item[(i)] $\mathbf 1_E \in D_{1,s}$;
\item[(ii)] $\mathbf 1_E  \in B^{2,s}\left(\RN\right)$;
\item[(iii)] $\mathbf 1_E  \in B^{1,2s}\left(\RN\right)$.
\end{itemize}
Furthermore, when either one of these equivalent statements hold, we have
\begin{equation}\label{besovind2}
||(-\sA)^s \In||_{L^1(\RN)}= C(s) \mathscr N_{2,s}(\mathbf 1_E)^2 = C(s) \mathscr N_{1,2s}(\mathbf 1_E).  
\end{equation}
\end{corollary}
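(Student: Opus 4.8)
The plan is to unwind all the equivalences through the already-established identities in Proposition \ref{L:L1As} and Remark \ref{R:alteAs}, together with the two alternative descriptions of the Besov seminorms \eqref{defBnorm}. The central observation is that for the specific function $f = \mathbf 1_E$ one has the pointwise identity $|\mathbf 1_E - \mathbf 1_E(X)| = |\mathbf 1_E - \mathbf 1_E(X)|^2$, since the difference of two indicator values is always $0$ or $\pm 1$. Applying $P_t$ and integrating, this means
\[
\mathscr N_{1,2s}(\mathbf 1_E) = \int_0^\infty \frac{1}{t^{1+s}} \int_{\RN} P_t\left(|\mathbf 1_E - \mathbf 1_E(X)|\right)(X)\, dX\, dt = \mathscr N_{2,s}(\mathbf 1_E)^2,
\]
so statements (ii) and (iii) say literally the same thing (the two seminorms are finite together and equal when finite). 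Here I use that $\frac{\alpha p}{2} = s$ when $(p,\alpha) = (2,s)$ and $\frac{\alpha p}{2} = s$ when $(p,\alpha) = (1,2s)$, which matches the $t^{-1-s}$ weight appearing in \eqref{besovind}.

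Next I would close the loop with (i). First, (i) $\Rightarrow$ (ii)/(iii): if $\mathbf 1_E \in D_{1,s}$, then Proposition \ref{L:L1As} (in particular the chain \eqref{besovind}) shows precisely that
\[
C(s) \int_0^\infty \frac{1}{t^{1+s}} \int_{\RN} P_t\left(|\mathbf 1_E - \mathbf 1_E(X)|^2\right)(X)\, dX\, dt = ||(-\sA)^s \mathbf 1_E||_{L^1(\RN)} < \infty,
\]
which is exactly $C(s)\mathscr N_{2,s}(\mathbf 1_E)^2 < \infty$, hence $\mathbf 1_E \in B^{2,s}(\RN)$ (membership in $L^2$ is automatic since $\mathbf 1_E$ is bounded with $\mathbf 1_E \in L^1$, hence in every $L^p$, $1 \le p \le \infty$), and likewise $\mathbf 1_E \in B^{1,2s}(\RN)$ by the pointwise-square identity above. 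Conversely, for (ii)/(iii) $\Rightarrow$ (i), I would argue that finiteness of $\mathscr N_{2,s}(\mathbf 1_E)$ forces $\int_0^\infty t^{-1-s}||P_t \mathbf 1_E - \mathbf 1_E||_{L^1(\RN)}\, dt < \infty$ by \eqref{altL1}; then the computation in the proof of Proposition \ref{L:L1As} runs in reverse — the inner $t$-integrals $\int_0^\infty t^{-1-s}(1 - P_t\mathbf 1_E(X))\, dt$ on $E$ and $\int_0^\infty t^{-1-s}P_t\mathbf 1_E(X)\, dt$ on $\RN \setminus E$ are sign-definite, so Tonelli's theorem lets me exchange order of integration and conclude that $\As \mathbf 1_E$ is defined a.e.\ and lies in $L^1(\RN)$, i.e.\ $\mathbf 1_E \in D_{1,s}$. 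Finally, the identity \eqref{besovind2} is then read off directly from \eqref{besovind} and the pointwise-square identity.

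The only genuinely delicate point is the converse direction (ii) $\Rightarrow$ (i): one must be careful that the absolute value inside $\As \mathbf 1_E = C(s)\int_0^\infty t^{-1-s}(P_t\mathbf 1_E - \mathbf 1_E)\, dt$ does not destroy integrability when one removes it. This is exactly where the indicator structure is essential and where the proposition fails for general $f$: on $E$ the integrand $1 - P_t\mathbf 1_E(X) \ge 0$ has a fixed sign, and on $\RN \setminus E$ the integrand $P_t\mathbf 1_E(X) \ge 0$ has a fixed sign, so no cancellation in $t$ can hide an infinity, and $\int |\As\mathbf 1_E|\, dX = \int_0^\infty t^{-1-s}||P_t\mathbf 1_E - \mathbf 1_E||_{L^1}\, dt$ exactly. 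Everything else is a repackaging of Proposition \ref{L:L1As}, Remark \ref{R:alteAs}, and Remark \ref{R:01} (to handle the $t \to \infty$ tail using \eqref{trace}, which is in force), so I would present the argument compactly as a corollary of those.
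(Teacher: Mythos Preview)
Your proposal is correct and follows essentially the same route as the paper: (ii)$\Leftrightarrow$(iii) via the pointwise identity $|\mathbf 1_E-\mathbf 1_E(X)|=|\mathbf 1_E-\mathbf 1_E(X)|^2$, (i)$\Rightarrow$(ii) via \eqref{besovind}, and (iii)$\Rightarrow$(i) via \eqref{altL1} together with the computation in Proposition~\ref{L:L1As}. Your treatment of the converse (iii)$\Rightarrow$(i) is in fact slightly more scrupulous than the paper's, which cites Remark~\ref{R:alteAs} even though that remark was stated under hypothesis (i); your explicit appeal to sign-definiteness and Tonelli makes clear why the identity $\|(-\sA)^s\mathbf 1_E\|_{L^1}=C(s)\int_0^\infty t^{-1-s}\|P_t\mathbf 1_E-\mathbf 1_E\|_{L^1}\,dt$ holds unconditionally for indicators, closing the loop cleanly.
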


\begin{proof}
Proposition \ref{L:L1As} and \eqref{besovind} immediately give (i)\ $\Longrightarrow$\ (ii)\ $\Longleftrightarrow$\ (iii). To complete the proof suppose (iii) hold. According to \eqref{defBnorm} this means that $\In\in L^1(\RN)$ and 
\begin{align*}
\mathscr N_{1,2s}(\In)  = \int_0^\infty  \frac{1}{t^{1+ s}} \int_{\RN} P_t\left(|\In - \In(X)|\right)(X) dX dt  = \int_0^\infty  \frac{1}{t^{1+ s}} ||P_t\In - \In||_{L^1(\RN)} dt 
< \infty,
\end{align*}
where in the second equality we have used \eqref{altL1}. By Remark \ref{R:alteAs} we infer
\[
||\As \In||_1 = C(s) \mathscr N_{1,2s}(\In) <\infty.
\]
This implies that $\As \In\in L^1(\RN)$, hence (i) holds. 

\end{proof}

At this point it is worth recalling some well-known facts concerning the case $p = 2$ for the classical Besov spaces. From \eqref{normabesovnorma} and Plancherel theorem we have
\begin{align*}
\mathscr N^\Delta_{2,\alpha}(f)^2 & = \int_{\RN} \int_{\RN} \frac{|f(X) - f(Y)|^2}{|X-Y|^{N+2\alpha}} dX dY = \int_{\RN} \int_{\RN} \frac{|f(Y+h) - f(Y)|^2}{|h|^{N+2\alpha}} dY dh
\\
& = \int_{\RN} \int_{\RN} |e^{2\pi i<h,\xi>} - 1|^2 |\hat f(\xi)|^2 d\xi \frac{dh}{|h|^{N+2\alpha}}
\\
& = 2 \int_{\RN} |\hat f(\xi)|^2 \int_{\RN} \frac{1 - \cos(2\pi<h,\xi>)}{|h|^{N+2\alpha}} dh d\xi.
\end{align*}
Now, a simple computation gives
\begin{align*}
& \int_{\RN} \frac{1 - \cos(2\pi<h,\xi>)}{|h|^{N+2\alpha}} dh = (2 \pi |\xi|)^{2\alpha} \int_{\RN} \frac{1 - \cos(h_N)}{|h|^{N+2\alpha}} dh
\\
& = \frac{\pi^{\frac N2} \G(1-s)}{s 2^{2s} \G\left(\frac{N+ 2s}{2}\right)} (2 \pi |\xi|)^{2\alpha},
\end{align*}
where in the last equality we have used the well-known identity
\[
\int_{\RN} \frac{1 - \cos(h_N)}{|h|^{N+2\alpha}} dh =  \frac{\pi^{\frac N2} \G(1-\alpha)}{\alpha 2^{2\alpha} \G\left(\frac{N+ 2\alpha}{2}\right)},
\]
see e.g. \cite[Propositions 5.1 and 5.6]{Gft}. Substituting in the above, using the identity $\widehat{(-\Delta)^{\alpha/2} f}(\xi) = (2 \pi |\xi|)^{\alpha} \hat f(\xi)$, and Plancherel theorem again, we conclude
\begin{equation}\label{cute}
\mathscr N^\Delta_{2,\alpha}(f)^2  =  \frac{2^{1-2\alpha} \pi^{\frac N2} \G(1-\alpha)}{\alpha \G\left(\frac{N+ 2\alpha}{2}\right)} ||(-\Delta)^{\alpha/2} f||_{L^2(\RN)}. 
\end{equation}
Finally, from \eqref{cute} we find
\begin{equation}\label{verycute}
\mathscr N^\Delta_{2,\alpha}(\In)^2  =  \frac{2^{1-2\alpha} \pi^{\frac N2} \G(1-\alpha)}{\alpha \G\left(\frac{N+ 2\alpha}{2}\right)} ||(-\Delta)^{\alpha/2} \In||_{L^2(\RN)}. 
\end{equation}
We mention that the left-hand side of \eqref{verycute} is what in their seminal work \cite{CRS} Caffarelli, Roquejoffre and Savin call the nonlocal perimeter of a measurable set $E\subset \RN$. Precisely, for every $0<s<1/2$ the $s$-\emph{perimeter} of $E$ is 
\begin{equation}\label{sper}
P_s(E) \overset{def}{=}  \mathscr N^\Delta_{2,s}(\In)^2 = \mathscr N^\Delta_{1,2s}(\In),
\end{equation} 
where in the second equality we have used \eqref{besovind2}.

\begin{remark}\label{R:besov}
To understand the limitation $0<s<1/2$, we stress that if $E$ is a non-empty open set $E$, such that $|E|<\infty$, then in view of \eqref{verycute}, \eqref{sper}, we have
\[
P_s(E) < \infty \Longleftrightarrow ||(-\Delta)^{s/2} \In||_{L^2(\RN)} < \infty \Longleftrightarrow \int_{\RN} |\xi|^{2s} |\hat{\mathbf 1}_E(\xi)|^2 d\xi < \infty.
\]
To now see why the condition $P_s(E) < \infty$ imposes the restriction $0<s<1/2$, consider e.g. the unit ball $B = \{x\in \RN\mid |X|<1\}$. Using Bochner's formula 
$$
\hat u(\xi)=2\pi|\xi|^{-\frac{N}2 +1}\int^\infty_0 r^{\frac{N}2} f(r)J_{\frac{N}2-1}
(2\pi|\xi|r) dr
$$
for the Fourier transform of a spherically symmetric function $u(X) = f(|X|)$, see  \cite[Theorem 40 on p. 69]{BC}, in combination with the identity
\[
\int_0^1 x^{\nu+1} J_\nu(a x) dx = a^{-1} J_{\nu +1}(a),\ \ \ \ \ \ \ \ \ \Re \nu > -1,
\]
see \cite[6.561, 5., p.683]{GR}, we find $\hat{\mathbf 1}_B(\xi) = |\xi|^{-\frac N2} J_{\frac N2}(2\pi 
|\xi|)$,
where $J_\nu(z)$ is the Bessel function of the first kind and order $\nu$. Since the asymptotic behaviour of $J_\nu$ is given by
\[
J_\nu(z)\cong\frac{2^{-\nu}}{\Gamma(\nu+1)}z^\nu,\quad\text{as }z\to 0,\ \ J_\nu(z) =\sqrt{\frac2{\pi
z}}\cos\left(z-\frac{\pi\nu}2-\frac\pi4\right)+
O(z^{-\frac32})\quad\text{as }z\to+\infty,
\]
we see that $|\xi|^{s} \hat{\mathbf 1}_B(\xi) \in L^2(\RN)$ if and only if $0<s<1/2$. More in general, for any non-empty open set $E\subset \RN$, one has $(-\Delta)^{s/2} \In \not\in L^2(\RN)$ for $s = 1/2$, see \cite[Lemma 3.2]{Sickel}. 
\end{remark}


 \section{A Poincar\'e inequality in Gaussian space}\label{sec4}

In 1968 D. Aronson established the following off-diagonal Gaussian lower bound for the fundamental solution $p(x,y,t)$ of a divergence form uniformly parabolic equation with bounded measurable coefficients 
\[
C t^{-\frac n2} \exp\left(- \alpha \frac{|x-y|^2}{t}\right) \le p(x,y,t),
\]
see \cite{A68}. His proof was based on the Harnack inequality for parabolic equations which had been recently established by J. Moser in \cite{Mo1}, \cite{Mo2}. In their work \cite{FS}  Fabes and Stroock's gave a beautiful rendition of the groundbreaking 1958 ideas of Nash in \cite{nash}. In their approach the above Gaussian lower bound was obtained independently from the Harnack inequality. One crucial tool was the following Poincar\'e inequality with respect to the Gaussian measure that had already played a key role in Nash' seminal work: \emph{for every $f\in \mathcal S(\Rn)$ one has}
\begin{equation}\label{pg}
\int_{\Rn} |f - a_f|^2 d\mu \le 2 \int_{\Rn} |\p f|^2 d\mu.
\end{equation}
In \eqref{pg} we have denoted by $d\mu(x) = (4\pi)^{-\frac n2} e^{-\frac{|x|^2}{4}} dx$ the normalised Gaussian measure in $\Rn$, and we have let $a_f = \int_{\Rn} f d\mu$.

The objective of this section is to prove the following generalisation of the inequality \eqref{pg}, when Gaussian measure is replaced by the transition density kernel $p(X,Y,t)$ in \eqref{hs}.

\begin{proposition}[Generalised Nash inequality]\label{Poincare}
Let $f\in \So$. For all $X\in\RN$ and $t>0$, we have
$$\int_{\RN}{|f(Y)- P_tf(X)|^2p(X,Y,t)\,{\rm d}Y}\leq 2t\int_{\RN}{<K(t)\nabla f(Y),\nabla f(Y)> p(X,Y,t)\,{\rm d}Y}.$$
\end{proposition}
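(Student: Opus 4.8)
\medskip
\noindent\emph{Proof strategy.} The asserted inequality is a Poincar\'e (spectral–gap) estimate for the Gaussian probability measure $p(X,\cdot,t)\,{\rm d}Y$, which has mean $e^{tB}X$ and covariance matrix $2t\,K(t)$, and the plan is to prove it by a semigroup interpolation in the spirit of the Bakry--\'Emery carr\'e-du-champ method; when $\sA=\Delta$ this reproduces a proof of \eqref{pg}. Fix $X\in\RN$ and $t>0$, put $c:=P_tf(X)$, and interpolate between the two sides via
$$G(s):=P_{t-s}\big((P_s f-c)^2\big)(X),\qquad 0\le s\le t.$$
Since $P_0$ is the identity, $G(t)=\big(P_tf(X)-c\big)^2=0$, whereas $G(0)=P_t\big((f-c)^2\big)(X)=\int_{\RN}|f(Y)-P_tf(X)|^2\,p(X,Y,t)\,{\rm d}Y$ is exactly the left-hand side. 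Hence the left-hand side equals $-\int_0^t G'(s)\,{\rm d}s$, and it suffices to control $-G'(s)$.

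The next step is to differentiate $G$, using that $P_s h$ solves \eqref{K0} (so $\de_s P_s h=\sA P_s h$), the commutation $\sA P_s=P_s\sA$, and $\de_s(P_s f)=\sA(P_s f)$. With $v_s:=P_s f-c$ (which satisfies $\de_s v_s=\sA v_s$, constants being annihilated by $\sA$), one obtains
$$-G'(s)=P_{t-s}\big(\sA(v_s^2)-2v_s\,\sA v_s\big)(X).$$
The zeroth- and first-order (drift) terms of $\sA$ cancel in this bracket, which therefore equals the carr\'e du champ of $\sA$, namely $\sA(v^2)-2v\,\sA v=2<Q\nabla v,\nabla v>$; since $\nabla v_s=\nabla(P_s f)$ this yields
$$-G'(s)=2\,P_{t-s}\big(<Q\nabla(P_s f),\nabla(P_s f)>\big)(X)\ \ge\ 0.$$

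The key algebraic input is the intertwining of the gradient with the semigroup. Writing, from the explicit kernel \eqref{PtKt}, $P_s g(Y)=\int_{\RN}\gamma_s(Z)\,g(e^{sB}Y+Z)\,{\rm d}Z$ with $\gamma_s$ a centred Gaussian probability density — a manifestation of the underlying group structure — differentiation under the integral sign gives $\nabla(P_s g)(Y)=e^{sB^*}\,\big(P_s(\nabla g)\big)(Y)$. Hence $<Q\nabla(P_s f),\nabla(P_s f)>\ =\ <M_s\,P_s(\nabla f),P_s(\nabla f)>$ with $M_s:=e^{sB}Q\,e^{sB^*}=M_s^*\ge 0$, and Jensen's inequality for the convex form $\xi\mapsto<M_s\xi,\xi>$, applied to the probability average $P_s$, bounds this by $P_s\big(<M_s\nabla f,\nabla f>\big)(Y)$. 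Substituting and using $P_{t-s}P_s=P_t$,
$$-G'(s)\ \le\ 2\,P_t\big(<M_s\nabla f,\nabla f>\big)(X)\ =\ 2\int_{\RN}<e^{sB}Q\,e^{sB^*}\nabla f(Y),\nabla f(Y)>\,p(X,Y,t)\,{\rm d}Y.$$
Integrating over $s\in(0,t)$, interchanging the order of integration, and recognising $\int_0^t e^{sB}Q\,e^{sB^*}\,{\rm d}s=t\,K(t)$ from \eqref{Kt}, one reaches the claimed inequality.

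I expect the main obstacle to be the intertwining identity $\nabla P_s=e^{sB^*}P_s\nabla$ and the correct placement of the transposes; once that is in hand, the carr\'e du champ identity, the Jensen step (which is where the hypotheses $Q=Q^*\ge 0$ enter), and the matrix identity $\int_0^t e^{sB}Q\,e^{sB^*}\,{\rm d}s=t\,K(t)$ are routine. A secondary point will be to justify the differentiations under the integral and the identities $\de_s P_s=\sA P_s=P_s\sA$ for $f\in\So$, which should follow from the smoothness and Gaussian-controlled growth of $P_s f$ and its derivatives (arguing first, if convenient, for $f\in C^\infty_0(\RN)$ and then passing to the limit). Alternatively, when $K(t)>0$, one can bypass the carr\'e-du-champ computation entirely by the substitution $Y=e^{tB}X+\sqrt{2t\,K(t)}\,W$, which turns $p(X,\cdot,t)\,{\rm d}Y$ into the standard Gaussian on $\RN$ and reduces the statement to \eqref{pg}.
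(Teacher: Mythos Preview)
Your argument is correct and essentially identical to the paper's: the paper interpolates with $\psi(s)=P_s\big((P_{t-s}f)^2\big)(X)$, which differs from your $G(s)$ only by the change $s\mapsto t-s$ and an additive constant, and both computations reduce to the carr\'e du champ $2P_{t-s}\big(<Q\nabla P_s f,\nabla P_s f>\big)(X)$. Your intertwining identity $\nabla P_s=e^{sB^\star}P_s(\nabla\,\cdot\,)$ combined with Jensen is exactly the content of the paper's Lemma~\ref{BE}, after which the integration against $\int_0^t e^{sB}Qe^{sB^\star}\,ds=tK(t)$ concludes as you wrote.
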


The proof of this result is deferred to subsection \ref{SS:proof}. It should be obvious to the reader that, if we let $\sA = \Delta$ in \eqref{K0}, then from \eqref{Kt} we see that $K(t) \equiv I_N$, and thus the case $t = 1$ and $X=0$ of Proposition \ref{Poincare} is exactly the inequality of Nash \eqref{pg}. 

Interestingly, Proposition \ref{Poincare} implies the following localised Poincar\'e inequality on the intertwined pseudo-balls $B_{r^2}(X,r)$ in \eqref{pb}, see also \eqref{m}. For a given function $f$ and a measurable set $E\subset \RN$, we indicate with $f_E= |E|^{-1} \int_E f(Y) dY$  the average of $f$ on $E$. For ease of notation, in the following statement we let $f_r = f_{B_{r^2}(X,r)}$.

\begin{corollary}\label{C:henri}
Let $r>0$, $X\in\RN$, and $f\in C^\infty_0(B_{4r^2}(X,2r))$. Then, with $C = 2 e^{1/4}$, one has
\[
\int_{B_{r^2}(X,r)} |f(Y) - f_{r}|^2 dY \le C r^2 \int_{B_{4r^2}(X,2r)} <K(r^2) \nabla f(Y),\nabla f(Y)> dY.
\]
\end{corollary}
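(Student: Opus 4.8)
The idea is to derive the localised estimate in Corollary \ref{C:henri} from the global Gaussian Poincar\'e inequality of Proposition \ref{Poincare} by exploiting the two-sided bounds on the kernel $p(X,Y,t)$ on the pseudo-balls $B_{t}(X,\sqrt{t})$. Concretely, I will apply Proposition \ref{Poincare} with $t = r^2$ at the fixed centre $X$, and then compare the Gaussian-weighted $L^2$-quantities on both sides with the unweighted integrals over the pseudo-balls $B_{r^2}(X,r)$ and $B_{4r^2}(X,2r)$.

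First I would record the elementary kernel estimates that follow directly from the explicit formula \eqref{PtKt}: on the pseudo-ball $B_{r^2}(X,r) = \{Y : m_{r^2}(X,Y) < r\}$ one has $\exp(-m_{r^2}(X,Y)^2/(4r^2)) \ge e^{-1/4}$, so that
\[
p(X,Y,r^2) \ge \frac{c_N}{V(r^2)} e^{-1/4} \qquad \text{for } Y\in B_{r^2}(X,r),
\]
while the trivial bound $\exp(-m_{r^2}(X,Y)^2/(4r^2)) \le 1$ gives $p(X,Y,r^2) \le c_N/V(r^2)$ everywhere. Since $\operatorname{supp} f \subset B_{4r^2}(X,2r)$, the left-hand side of Proposition \ref{Poincare} (with $P_{r^2}f(X)$ in place of the average) is bounded below by $(c_N e^{-1/4}/V(r^2))\int_{B_{r^2}(X,r)} |f(Y) - P_{r^2}f(X)|^2 dY$, and the right-hand side is bounded above by $2r^2 (c_N/V(r^2))\int_{B_{4r^2}(X,2r)} \langle K(r^2)\nabla f,\nabla f\rangle dY$. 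Dividing through by $c_N/V(r^2)$ cancels the volume factor and yields
\[
\int_{B_{r^2}(X,r)} |f(Y) - P_{r^2}f(X)|^2 dY \le 2 e^{1/4}\, r^2 \int_{B_{4r^2}(X,2r)} \langle K(r^2)\nabla f(Y),\nabla f(Y)\rangle dY.
\]

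Finally, I would replace the constant $P_{r^2}f(X)$ by the average $f_r = f_{B_{r^2}(X,r)}$ using the standard variational fact that $c \mapsto \int_{B_{r^2}(X,r)} |f - c|^2 dY$ is minimised at $c = f_r$; hence the left-hand side only decreases when we substitute $f_r$ for $P_{r^2}f(X)$, and the inequality with $C = 2e^{1/4}$ follows. The main point to be careful about is that Proposition \ref{Poincare} is stated for $f\in\So$, so I should note that any $f\in C^\infty_0(B_{4r^2}(X,2r))$ belongs to $\So$, and that the localisation on the right-hand side is legitimate precisely because $\nabla f$ is supported in $B_{4r^2}(X,2r)$ — there is no obstacle here, just the observation that the compact support confines both integrals. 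The only genuinely quantitative input is the pair of kernel bounds above, which are immediate from \eqref{PtKt} and \eqref{pb}.
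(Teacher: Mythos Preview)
Your proposal is correct and follows essentially the same approach as the paper's proof: apply Proposition~\ref{Poincare} at $t=r^2$, use the lower bound $p(X,Y,r^2)\ge c_N e^{-1/4}/V(r^2)$ on $B_{r^2}(X,r)$ and the upper bound $p(X,Y,r^2)\le c_N/V(r^2)$ on the support of $\nabla f$, cancel the common factor, and then replace $P_{r^2}f(X)$ by $f_r$ via the variational minimality of the mean. The paper's argument is line-for-line the one you outline, including the observation that $f\in C^\infty_0(B_{4r^2}(X,2r))\subset\So$.
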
 

\begin{proof}
Since we obviously have $f\in \So$, with the choice $t = r^2$ we can apply Proposition \ref{Poincare} to $f$, obtaining
\[
\int_{\RN}{|f(Y)- P_{r^2} f(X)|^2p(X,Y,r^2)\,{\rm d}Y}\leq 2 r^2 \int_{\RN}{<K(r^2)\nabla f(Y),\nabla f(Y)> p(X,Y,r^2)\,{\rm d}Y}.
\]
On the other hand, we trivially have
\[
\int_{B_{r^2}(X,r)}{|f(Y)- P_{r^2} f(X)|^2p(X,Y,r^2)\,{\rm d}Y}\leq \int_{\RN}{|f(Y)- P_{r^2} f(X)|^2p(X,Y,r^2)\,{\rm d}Y}.
\]
Now, on the set $B_{r^2}(X,r)$ we have $m_{r^2}(X,Y)^2 \le r^2$, and therefore
\[
p(X,Y,r^2)  = \frac{c_N}{V(r^2)} \exp\left(-\frac{m_{r^2}(X,Y)^2}{4r^2}\right) \ge \frac{c_N}{V(r^2)} \exp(-1/4).
\]
We thus find
\begin{align*}
& \frac{c_N}{V(r^2)} \exp(-1/4) \int_{B_{r^2}(X,r)} |f(Y)- P_{r^2} f(X)|^2 dY
\\
& \le 2 r^2 \int_{\RN}{<K(r^2)\nabla f(Y),\nabla f(Y)> p(X,Y,r^2)\,{\rm d}Y}
\\
& = \frac{2 c_N r^2}{V(r^2)} \int_{B_{4r^2}(X,2r)}{<K(r^2)\nabla f(Y),\nabla f(Y)> \exp\left(-\frac{m_{r^2}(X,Y)^2}{4r^2}\right) \,{\rm d}Y}
\\
& \le \frac{2 c_N r^2}{V(r^2)} \int_{B_{4r^2}(X,2r)}{<K(r^2)\nabla f(Y),\nabla f(Y)>  {\rm d}Y}.
\end{align*}
We thus obtain
\[
\int_{B_{r^2}(X,r)} |f(Y)- P_{r^2} f(X)|^2 dY \le 2 e^{1/4}r^2 \int_{B_{4r^2}(X,2r)}{<K(r^2)\nabla f(Y),\nabla f(Y)>  {\rm d}Y}.
\]
The desired conclusion follows observing that
\[
\int_{B_{r^2}(X,r)} |f(Y)- f_r|^2 dY \le \int_{B_{r^2}(X,r)} |f(Y)- P_{r^2} f(X)|^2 dY.  
\]  
\end{proof}

\begin{remark}\label{R:po}
We note that when $\sA = \Delta$ in \eqref{A0}, then $K(t) \equiv I_N$, and we see from \eqref{m} and \eqref{pb} that $B_{r^2}(X,r) = \{Y\in \RN\mid |Y-X|<r\}$ is the standard Euclidean ball $B(X,r)$. In such situation, Corollary \ref{C:henri} is nothing but the following form of the classical Poincar\'e inequality for $f\in W_0^{1,2}(B(X,2r))$,
\[
\int_{B(X,r)} |f(Y) - f_{B(X,r)}|^2 dY \le C r^2 \int_{B(X,2r)} |\nabla f(Y)|^2 dY.
\]
\end{remark}

\subsection{An inequality of Bakry-\'Emery type}
In this second part of the section we prove Proposition \ref{Poincare}. In preparation for it we establish an inequality reminiscent of one first proved by Bakry-\'Emery. 

\begin{lemma}\label{BE}
Let $f\in\So$. For $X\in \RN$ and $\tau>0$ we have
$$<Q\nabla P_{\tau}f(X),\nabla P_{\tau}f(X)> \leq P_{\tau}\left(<e^{\tau B}Qe^{\tau B^\star}\nabla f,\nabla f>\right)(X).$$
\end{lemma}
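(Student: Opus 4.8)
The plan is to differentiate along the semigroup flow, as in the classical Bakry--Émery argument. Fix $X\in\RN$ and $\tau>0$, and for $0\le\sigma\le\tau$ consider the interpolation function
\[
\Phi(\sigma) = P_{\tau-\sigma}\left(\langle e^{\sigma B} Q e^{\sigma B^\star}\nabla(P_\sigma f),\nabla(P_\sigma f)\rangle\right)(X).
\]
At $\sigma=0$ this is $P_\tau(\langle Q\nabla f,\nabla f\rangle)(X)$, and at $\sigma=\tau$ it is $\langle e^{\tau B}Qe^{\tau B^\star}\nabla(P_\tau f)(X),\nabla(P_\tau f)(X)\rangle$. Wait --- that is the reverse of what we want; so instead I would run the interpolation the other way, setting
\[
\Psi(\sigma) = P_{\sigma}\left(\langle e^{(\tau-\sigma)B} Q e^{(\tau-\sigma)B^\star}\,\nabla(P_{\tau-\sigma} f),\,\nabla(P_{\tau-\sigma} f)\rangle\right)(X),
\]
so that $\Psi(0) = \langle Q\nabla(P_\tau f)(X),\nabla(P_\tau f)(X)\rangle$ (the left-hand side of the claim) and $\Psi(\tau) = P_\tau\big(\langle e^{\tau B}Qe^{\tau B^\star}\nabla f,\nabla f\rangle\big)(X)$ (the right-hand side). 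The lemma will follow once I show $\Psi'(\sigma)\ge 0$ on $(0,\tau)$.

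The key computation is the derivative $\Psi'(\sigma)$. Here I will use two structural facts about the semigroup $P_t = e^{-t\sA}$ with $\sA = \operatorname{tr}(Q\nabla^2\cdot) + \langle BX,\nabla\cdot\rangle$: first, the commutation identity for the gradient, namely $\nabla(P_t g) = e^{tB^\star}\, P_t(\nabla g)$ (componentwise), which follows from the explicit translation structure of the Kolmogorov-type kernel $p(X,Y,t)$ and the group law noted in \cite{LP}; and second, that $\tfrac{d}{dt}P_t g = \sA P_t g = P_t \sA g$. Differentiating $\Psi$ and using these, the terms coming from the $\sigma$-derivative of the matrix factor $e^{(\tau-\sigma)B}Qe^{(\tau-\sigma)B^\star}$ and from the two occurrences of $P_{\tau-\sigma}f$ combine, after the commutation identity is applied, into a single expression of the form $P_\sigma\big(2\langle Q\, e^{(\tau-\sigma)B^\star}\nabla g,\; Q\, e^{(\tau-\sigma)B^\star}\nabla g\rangle_{\text{carré du champ}}\big)$ evaluated with $g = P_{\tau-\sigma}f$; more precisely one gets the carré du champ operator $\Gamma(h) = \tfrac12\big(\sA(h^2) - 2h\,\sA h\big) = \langle Q\nabla h,\nabla h\rangle \ge 0$ applied to the (vector of) relevant functions, contracted with the nonnegative matrix. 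Since $Q = Q^\star\ge 0$ and $\Gamma\ge 0$, and $P_\sigma$ is positivity-preserving, this yields $\Psi'(\sigma)\ge 0$.

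The main obstacle I expect is the bookkeeping in the derivative computation: one must carefully match the three contributions ($\partial_\sigma$ of the outer $P_\sigma$, of the matrix exponentials, and of the inner $P_{\tau-\sigma}f$ through $\tfrac{d}{d\sigma}P_{\tau-\sigma}f = -\sA P_{\tau-\sigma}f$) and verify that the cross terms assemble exactly into $P_\sigma$ of a completed square of the form $\langle Q\nabla\partial_i h, \nabla\partial_j h\rangle$ summed against $(e^{(\tau-\sigma)B}Qe^{(\tau-\sigma)B^\star})_{ij}$, with no leftover terms of indefinite sign. A clean way to organise this is to first establish the intertwining $\nabla P_t = e^{tB^\star}P_t\nabla$ as a preliminary step (it reduces everything to the case $B$-free after conjugation), and then invoke the classical Bakry--Émery chain-rule identity $\sA(\phi(h)) = \phi'(h)\sA h + \phi''(h)\langle Q\nabla h,\nabla h\rangle$ with $\phi(r)=r^2$. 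Everything else is routine, relying only on $Q\ge 0$ and the contractivity/positivity of $P_t$ already recorded in the excerpt.
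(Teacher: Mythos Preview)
Your interpolation function $\Psi$ is set up incorrectly: with your choice $M(\sigma)=e^{(\tau-\sigma)B}Qe^{(\tau-\sigma)B^\star}$ one gets
\[
\Psi(0)=\bigl\langle e^{\tau B}Qe^{\tau B^\star}\nabla P_\tau f(X),\nabla P_\tau f(X)\bigr\rangle,
\]
not $\langle Q\nabla P_\tau f(X),\nabla P_\tau f(X)\rangle$; similarly $\Psi(\tau)=P_\tau(\langle Q\nabla f,\nabla f\rangle)(X)$, which is not the right-hand side of the lemma. (Your first guess $\Phi$ has the same defect.) The interpolation that actually matches the two sides is
\[
\widetilde\Psi(\sigma)=P_\sigma\bigl(\langle e^{\sigma B}Qe^{\sigma B^\star}\nabla(P_{\tau-\sigma}f),\nabla(P_{\tau-\sigma}f)\rangle\bigr)(X),
\]
and if you redo your derivative computation with this $M(\sigma)=e^{\sigma B}Qe^{\sigma B^\star}$ (so $M'=BM+MB^\star$), the $B^\star\nabla g$ terms coming from $M'$ and from $\nabla(\sA g)$ cancel exactly, and you are left with
\[
\widetilde\Psi'(\sigma)=2\,P_\sigma\bigl(\operatorname{tr}(Q^{1/2}(\nabla^2 g)\,M(\sigma)\,(\nabla^2 g)\,Q^{1/2})\bigr)(X)\ge 0,
\]
with $g=P_{\tau-\sigma}f$. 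So the Bakry--\'Emery route does close, once the matrix weight is corrected.

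That said, the paper's proof is far shorter and avoids interpolation entirely. It uses only the kernel identity $\nabla_X p(X,Y,\tau)=-e^{\tau B^\star}\nabla_Y p(X,Y,\tau)$ (equivalently, the intertwining $\nabla P_\tau f=e^{\tau B^\star}P_\tau(\nabla f)$ that you yourself mention), an integration by parts, and the Cauchy--Schwarz inequality applied componentwise against the probability measure $p(X,\cdot,\tau)\,dY$:
\[
\langle Q\nabla P_\tau f,\nabla P_\tau f\rangle
=\sum_j\Bigl(\!\int (Q^{1/2}e^{\tau B^\star}\nabla f)_j\,p\,dY\Bigr)^{\!2}
\le \int |Q^{1/2}e^{\tau B^\star}\nabla f|^2\,p\,dY
= P_\tau\bigl(\langle e^{\tau B}Qe^{\tau B^\star}\nabla f,\nabla f\rangle\bigr).
\]
In other words, your ``preliminary step'' is already the whole proof; the $\Gamma_2$ machinery is not needed here.
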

\begin{proof}
From the explicit expression of the kernel in \eqref{hs}, one can see that
$$\nabla_X p(X,Y,\tau)=-e^{\tau B^\star}\nabla_Y p(X,Y,\tau) \qquad\mbox{for all }X,Y\in\RN,\,\,\tau>0.$$
Hence, exploiting also $P_t1=1$, we find
\begin{eqnarray*}
&&<Q\nabla P_{\tau}f(X),\nabla P_{\tau}f(X)> = <Q^{1/2}\nabla P_{\tau}f(X),Q^{1/2}\nabla P_{\tau}f(X)>
\\
&=&\sum_{j=1}^N\left(\int_{\RN}{f(Y)\left(Q^{\frac{1}{2}}\nabla_X \right)_j p(X,Y,\tau)\,{\rm d}Y}\right)^2 \\ 
&=&\sum_{j=1}^N\left(-\int_{\RN}{f(Y)\left(Q^{\frac{1}{2}}e^{\tau B^\star}\nabla_Y \right)_j p(X,Y,\tau)\,{\rm d}Y}\right)^2 \\
&=&\sum_{j=1}^N\left(\int_{\RN}{\left(Q^{\frac{1}{2}}e^{\tau B^\star}\nabla f(Y) \right)_j p(X,Y,\tau)\,{\rm d}Y}\right)^2 \\
&\leq& \sum_{j=1}^N\int_{\RN}{\left(Q^{\frac{1}{2}}e^{\tau B^\star}\nabla f(Y) \right)^2_j p(X,Y,\tau)\,{\rm d}Y}\int_{\RN}{p(X,Y,\tau)\,{\rm d}Y}\\
&=&\int_{\RN}{<e^{\tau B}Qe^{\tau B^\star}\nabla f(Y),\nabla f(Y)> p(X,Y,\tau)\,{\rm d}Y}=P_{\tau}\left(<e^{\tau B}Qe^{\tau B^\star}\nabla f,\nabla f>\right)(X).
\end{eqnarray*}
\end{proof}

We mention that, in the special case of the Kolmogorov operator $\Delta_v + <v,\nabla_x> - \p_t$ in $\R^{2n+1}$, a more indirect proof of Lemma \ref{BE} can be found in \cite[Proposition 2.5]{BGM}. There, the authors perform a suitable perturbation of a carr\'e du champ related to the operator.

\subsection{Proof of Proposition \ref{Poincare}}\label{SS:proof}
Fix $f\in \So$, $t>0$, and $X\in\RN$. For any $0\leq s \leq t$, we define
$$\psi(s)=P_s\left((P_{t-s}f)^2\right)(X).$$
Notice that
$$\psi(0)=(P_{t}f(X))^2\qquad \mbox{ and }\qquad \psi(t)=P_{t}\left(f^2\right)(X).$$
The reason for introducing the function $\psi(s)$ is that
$$\int_{\RN}{|f(Y)- P_tf(X)|^2p(X,Y,t)\,{\rm d}Y}=P_{t}\left(f^2\right)(X)-(P_{t}f(X))^2=\psi(t)-\psi(0) = \int_0^t \psi'(s) ds.$$
Therefore, the proof of Proposition \ref{Poincare} is completed if we can show that
\begin{align}\label{be}
\int_0^t \psi'(s) ds &\le 2t\int_{\RN}{<K(t)\nabla f(Y),\nabla f(Y)> p(X,Y,t)\,{\rm d}Y} \\
&= 2 P_t(<tK(t)\nabla f,\nabla f>)(X).
\notag
\end{align}
With this objective in mind, from the chain rule and from \cite[Lemma 2.2]{GT} we obtain for every $0<s<t$
\begin{align*}
\psi'(s)&=\sA P_s\left((P_{t-s}f)^2\right)(X) - 2P_s\left(P_{t-s}f\sA\left(P_{t-s}f\right)\right)(X)\\
&=P_s\left(\sA\left((P_{t-s}f)^2\right)\right)(X)- 2P_s\left(P_{t-s}f\sA\left(P_{t-s}f\right)\right)(X)\\
&=2 P_s\left(<Q\nabla P_{t-s}f,\nabla P_{t-s}f>\right)(X) + 2 P_s\left(P_{t-s}f \sA\left(P_{t-s}f\right)\right)(X)\\
&- 2P_s\left(P_{t-s}f\sA\left(P_{t-s}f\right)\right)(X)\\
&=2 P_s\left(<Q\nabla P_{t-s}f,\nabla P_{t-s}f>\right)(X).
\end{align*}
By Lemma \ref{BE} and the semigroup property of $\{P_t\}$, we thus find
\begin{align*}
\psi'(s)&\leq 2 P_s( P_{t-s}(<e^{(t-s) B}Qe^{(t-s) B^\star}\nabla f,\nabla f>))(X)\\
&= 2 P_{t}(<e^{(t-s) B}Qe^{(t-s) B^\star}\nabla f,\nabla f>)(X).
\end{align*} 
This gives
\begin{align*}
& \int_0^{t}\psi'(s)\,{\rm d}s \le 2\int_0^{t}P_{t}(<e^{(t-s) B}Qe^{(t-s) B^\star}\nabla f,\nabla f>)(X)\,{\rm d}s\\
&=2P_{t}\left(\int_0^{t}<e^{(t-s) B}Qe^{(t-s) B^\star}\nabla f,\nabla f>\,{\rm d}s\right)(X)\\
&=2P_{t}\left(<tK(t)\nabla f,\nabla f>\right)(X).
\end{align*}
This proves \eqref{be}, thus completing the proof.

\medskip

We close this section by noting that a full-strength analogue of the classical Poincar\'e inequality for the operator $\sA$ seems to be presently missing. In this respect, we mention the work \cite{WZ}, where the authors prove localised Poincar\'e inequalities for subsolutions of a class of divergence form equations with bounded measurable coefficients as in \eqref{divkolmo} below. In their recent work \cite{AM} the authors establish a Poincar\'e inequality on unbounded cylinders in suitable mixed Sobolev-Gaussian spaces adapted to the special operator $\sA = \Delta_v - <v,\nabla_v> + <v,\nabla_x>$ in $\R^{n}\times\R^n$.
We also mention that in \cite{GIMV} the authors obtain a Moser-type Harnack inequality for nonnegative solutions to
\begin{equation}\label{divkolmo}
{\rm div}_v\left(A(v,x,t)\nabla_v u\right)+<v,\nabla_xu>-\de_tu=0, 
\end{equation}
where $(v,x,t)\in\R^n\times\R^n\times\R$. The matrix $A(\cdot)$ is uniformly positive definite with bounded measurable entries. Their method does not make use of adapted versions of the Poincar\'e inequality. Exploiting the Harnack inequality in \cite{GIMV}, in \cite{aLPP} the authors prove a Gaussian lower bound of Aronson type (see also \cite{DiP}, and references therein, for Gaussian lower bounds for equations with H\"older coefficients).

 
\section{A sharp Harnack inequality for the extended equation}\label{sec5}

In their celebrated work  \cite{LY} Li and Yau proved (among other things) that if $f>0$ is a solution of the heat equation $\p_t f - \Delta f = 0$ on a boundaryless, complete $n$-dimensional Riemannan manifold $\mathbb M$ having $\operatorname{Ricci} \ge 0$, then the function $u = \log f$ satisfies the inequality on $\mathbb M\times (0,\infty)$,
\begin{equation}\label{LY0}
|\nabla u|^2 - \p_t u \le \frac{n}{2t}.
\end{equation}
Such inequality becomes an equality when $f$ is the heat kernel in flat $\Rn$. The relevance of \eqref{LY0} is underscored by the fact that a remarkable consequence of it is the following sharp form of the Harnack inequality, valid for any $x, y\in \mathbb M$ and any $0<s<t<\infty$,
$$f(x,s) \le f(y,t) \left(\frac ts\right)^{\frac n2} \exp\left(\frac{d(x,y)^2}{4t}\right).$$

The aim of this section is to prove a related sharp Harnack inequality for the semigroup associated with the extension operator 
$$\K_a = z^a(\K + \Ba),$$
where $\Ba$ is the Bessel operator $\Ba = \frac{\p^2}{\p z^2} + \frac az \frac{\p}{\p z}$ with $a> -1$. The operator $\K_a$ has been introduced in \cite[Section 3]{GT} for a generalisation of the Caffarelli-Silvestre extension result \cite{CS} relatively to the equation \eqref{K0} (see \cite[Theorems 4.1 and 4.2]{GT}). To solve the extension problem for $(-\mathscr K)^s$ we relied on the explicit construction of generalised Poisson kernels (\cite[Definition 3.7]{GT}) via the knowledge of the following Neumann fundamental solution for $\K_a$
\begin{equation}\label{P:Ga}
\mathscr G^{(a)}(X,t,z;Y,\tau,\zeta) = p(X,Y,t-\tau) p^{(a)}(z,\zeta,t-\tau), \quad\mbox{ for }X,Y\in\RN,\, t,\tau\in\R,\,z,\zeta>0,
\end{equation}
where
$$p^{(a)}(z,\zeta,t)  =(2t)^{-\frac{a+1}{2}}\left(\frac{z\zeta}{2t}\right)^{\frac{1-a}{2}}I_{\frac{a-1}{2}}\left(\frac{z\zeta}{2t}\right)e^{-\frac{z^2+\zeta^2}{4t}}$$
(we refer the reader to \cite[Proposition 3.5]{GT}).

For a given $\vf\in C^{\infty}_0(\Rnp)$, we now introduce the \emph{extension semigroup}
\begin{equation}\label{howtodef}
{\mathscr P}^{(a)}_t\vf (X,z)=\int_0^{\infty}{\int_{\RN}{\mathscr G^{(a)}(X,t,z;Y,0,\zeta)\vf(Y,\zeta)\zeta^a\,{\rm d}Y}\,{\rm d}\zeta}, \qquad (X,z)\in\Rnp,\,  t>0.
\end{equation}
The function $u(X,t,z)={\mathscr P}^{(a)}_t\vf(X,z)$ in \eqref{howtodef} solves the Cauchy problem with Neumann condition
$$\begin{cases}
\K_a u = 0\ \  \text{in}\ \Rnp\times (0,\infty),
\\
u(X,0,z) = \vf(X,z)\ \   (X,z)\in\Rnp,
\\
\lim_{z\rightarrow 0^+}{z^a \p_z u(X,t,z)}=0.
\end{cases}$$
The fact that $\{{\mathscr P}^{(a)}_t\}_{t>0}$ defines a stochastically complete semigroup follows from \cite[Proposition 3.6]{GT}.

We note explicitly that, when $\vf(X,z) = \vf(X)$, i.e., the initial datum is independent of $z$, then for every $z>0$, $X\in \RN$ and $t>0$ we have
\[
{\mathscr P}^{(a)}_t\vf (X,z)= P_t \vf(X).
\]
This follows from the observation that when $\vf$ is independent of $z$, then Fubini's theorem gives
\[
{\mathscr P}^{(a)}_t\vf (X,z) = P_t \vf(X) \int_0^\infty p^{(a)}(z,\zeta,t) \zeta^a d\zeta = P_t \vf(X), 
\]
where in the last equality we have used \cite[Proposition 2.3]{G18c}.

In Theorem \ref{shth} below we are going to establish a global Harnack estimate for ${\mathscr P}^{(a)}_t\vf$, with $\vf\geq 0$. The key step is a remarkable Li-Yau type inequality satisfied by the semigroup $\{{\mathscr P}^{(a)}_t\}_{t>0}$, see Lemma \ref{LYKaPt} below. We start with the following preliminary lemma, where we compute the derivative of the function $V(t)$ in \eqref{VS}. We denote
\begin{equation}\label{Ct}
C(t)=\int_0^t e^{-sB} Q e^{-s B^\star} ds > 0\qquad\mbox{for any }t>0.
\end{equation}
Using this notation, it is known (see e.g. \cite{LP}) that the kernel $p(X,Y,t)$ in \eqref{hs} reads as
\begin{equation}\label{fundholp}
p(X,Y,t)=\dfrac{(4\pi)^{-\frac{N}{2}}e^{-t\operatorname{tr}B}}{\sqrt{\det(C(t))}} \exp{ \left(  -\dfrac{\langle C^{-1}(t) \left(X-e^{-tB}Y\right), X-e^{-tB}Y \rangle}{4}   \right)}.
\end{equation}

\begin{lemma}
For all $t>0$ we have
\begin{equation}\label{relogd}
\operatorname{tr}\left(Q C^{-1}(t)\right)=\frac{d}{dt}\left(\log{\left( \operatorname{det}\left(C(t)\right)\right)}\right) + 2\operatorname{tr}(B)=\frac{d}{dt}\left(\log{ \left(\operatorname{det}\left(tK(t)\right)\right)}\right).
\end{equation}
\end{lemma}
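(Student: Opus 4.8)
The identity \eqref{relogd} consists of two equalities. For the second one, observe that by definition \eqref{Kt} we have $tK(t)=\int_0^t e^{sB}Q e^{sB^\star}\,ds$, whereas \eqref{Ct} gives $C(t)=\int_0^t e^{-sB}Q e^{-sB^\star}\,ds$. A direct change of variable and conjugation shows that $tK(t)=e^{tB} C(t) e^{tB^\star}$; indeed, writing $s\mapsto t-s$ in the integral for $C(t)$ gives $C(t)=\int_0^t e^{(s-t)B}Q e^{(s-t)B^\star}\,ds = e^{-tB}\big(\int_0^t e^{sB}Q e^{sB^\star}\,ds\big) e^{-tB^\star} = e^{-tB}\,(tK(t))\, e^{-tB^\star}$. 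Taking determinants, $\det(tK(t)) = e^{2t\operatorname{tr}B}\det(C(t))$, so that $\log\det(tK(t)) = 2t\operatorname{tr}B + \log\det(C(t))$. Differentiating in $t$ yields the second equality in \eqref{relogd}, once the first one is known. So the whole content is the first equality.

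For the first equality, the plan is to use Jacobi's formula for the derivative of a determinant: for a smooth invertible matrix-valued function $C(t)$,
\[
\frac{d}{dt}\log\det(C(t)) = \operatorname{tr}\!\left(C^{-1}(t)\, C'(t)\right).
\]
From \eqref{Ct} we read off directly $C'(t) = e^{-tB}Q e^{-tB^\star}$. Hence
\[
\frac{d}{dt}\log\det(C(t)) = \operatorname{tr}\!\left(C^{-1}(t)\, e^{-tB}Q e^{-tB^\star}\right).
\]
The remaining task is to massage the right-hand side into $\operatorname{tr}(QC^{-1}(t)) - 2\operatorname{tr}(B)$, and here is where the one genuine computation enters. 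The idea is to differentiate the relation $C(t) = e^{-tB}\,(tK(t))\,e^{-tB^\star}$ (or, equivalently, to relate $C'(t)$ to $C(t)$ via a differential equation). Differentiating $tK(t) = e^{tB}C(t)e^{tB^\star}$ gives $\frac{d}{dt}(tK(t)) = e^{tB}Qe^{tB^\star}$, which is consistent but not yet what we want. Instead I would differentiate $C(t)=e^{-tB}(tK(t))e^{-tB^\star}$ directly:
\[
C'(t) = -B\,C(t) - C(t)\,B^\star + e^{-tB}\Big(\tfrac{d}{dt}(tK(t))\Big)e^{-tB^\star} = -B\,C(t) - C(t)\,B^\star + Q,
\]
using $\frac{d}{dt}(tK(t)) = e^{tB}Qe^{tB^\star}$ and cancelling the exponentials. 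This is the key Lyapunov-type identity $C'(t) = Q - BC(t) - C(t)B^\star$. Then
\[
\operatorname{tr}(C^{-1}(t)C'(t)) = \operatorname{tr}(C^{-1}(t)Q) - \operatorname{tr}(C^{-1}(t)BC(t)) - \operatorname{tr}(C^{-1}(t)C(t)B^\star).
\]
By cyclicity of the trace, $\operatorname{tr}(C^{-1}(t)BC(t)) = \operatorname{tr}(B)$ and $\operatorname{tr}(C^{-1}(t)C(t)B^\star) = \operatorname{tr}(B^\star) = \operatorname{tr}(B)$, and $\operatorname{tr}(C^{-1}(t)Q) = \operatorname{tr}(QC^{-1}(t))$. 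This gives exactly $\frac{d}{dt}\log\det(C(t)) = \operatorname{tr}(QC^{-1}(t)) - 2\operatorname{tr}(B)$, which rearranges to the first equality in \eqref{relogd}.

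\textbf{Main obstacle.} There is no serious obstacle; the proof is a sequence of short linear-algebra manipulations. The only point requiring a little care is establishing the conjugation relation $tK(t) = e^{tB}C(t)e^{tB^\star}$ (equivalently differentiating $C(t)$ to get the Lyapunov identity $C'(t) = Q - BC(t) - C(t)B^\star$) and then applying Jacobi's formula correctly together with the cyclicity of the trace. One should also note that the invertibility of $C(t)$ and $K(t)$ for $t>0$ — needed for $\log\det$ and $C^{-1}$ to make sense — is exactly the H\"ormander condition \eqref{Kt}, which is in force throughout; this is why the statement is restricted to $t>0$.
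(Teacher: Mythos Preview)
Your proof is correct and follows essentially the same route as the paper's: the conjugation identity $tK(t)=e^{tB}C(t)e^{tB^\star}$ gives the second equality, and Jacobi's formula together with the Lyapunov-type identity $C'(t)=Q-BC(t)-C(t)B^\star$ gives the first. The only difference is cosmetic: the paper cites the Lyapunov identity from \cite[equation (4.6)]{AT}, whereas you derive it directly by differentiating $C(t)=e^{-tB}(tK(t))e^{-tB^\star}$.
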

\begin{proof}
From the relation $tK(t)=e^{tB}C(t)e^{tB^\star}$ we deduce that for every $t>0$,
$$\operatorname{det}\left(tK(t)\right)=e^{2t \operatorname{tr}(B)}\operatorname{det}\left(C(t)\right),$$
which easily implies the second equality in \eqref{relogd}. Concerning the first equality, we recall the formula 
\[
\frac{d}{dt}(\operatorname{det}(M(t)))=\operatorname{tr}(M'(t)M^{-1}(t))\,\operatorname{det}(M(t)),
\]
 which holds true for any symmetric invertible matrix $M(t)$, and gives
\begin{equation}\label{flogdet}
\frac{d}{dt}\left(\log{\left( \operatorname{det}\left(C(t)\right)\right)}\right)=\operatorname{tr}(C'(t)C^{-1}(t)).
\end{equation}
On the other hand, from the definition \eqref{Ct}, we obtain $C'(t)=e^{-tB}Q e^{-tB^*}$. We also know (see e.g. \cite[equation (4.6)]{AT}) that
\begin{equation}\label{dinuovodert}
e^{-tB}Q e^{-tB^*}= Q  - B C(t) - C(t) B^*.
\end{equation}
Inserting this in \eqref{flogdet}, we finally have
$$\frac{d}{dt}\left(\log{\left( \operatorname{det}\left(C(t)\right)\right)}\right)=\operatorname{tr}(\left( Q  - B C(t) - C(t) B^*\right)C^{-1}(t)) = \operatorname{tr}\left(Q C^{-1}(t)\right) - 2\operatorname{tr}(B).$$
\end{proof}

The following two lemmas are the crucial Li-Yau type estimates respectively for the Neumann fundamental solution $\mathscr G^{(a)}$ and for the nonnegative solutions $\mathscr{P}^{(a)}_t\vf$.

\begin{lemma}\label{LYKa}
For any $X,Y\in\RN$, $z,\zeta >0$, $t>\tau$, we denote
$$u(X,t,z)=\log\ \mathscr G^{(a)}(X,t,z;Y,\tau,\zeta).$$
Then, for $a\geq 0$ we have
\begin{align}\label{LiYau-GKa}
& \left\langle Q\nabla_X u (X,t,z),\nabla_X u(X,t,z)\right\rangle + (\de_z u (X,t,z))^2 + \left\langle BX,\nabla_X u(X,t,z)\right\rangle -\de_t u(X,t,z)
\\
&  <\frac{1}{2}\operatorname{tr}\left(Q C^{-1}(t-\tau)\right)+\frac{a+1}{2(t-\tau)}.
\notag
\end{align}
If instead $z = 0$, then \eqref{LiYau-GKa} holds true for any $a>-1$.
\end{lemma}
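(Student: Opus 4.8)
The plan is to exploit the product structure of the Neumann fundamental solution. By \eqref{P:Ga} we have $\mathscr{G}^{(a)}(X,t,z;Y,\tau,\zeta) = p(X,Y,t-\tau)\,p^{(a)}(z,\zeta,t-\tau)$, hence $u = \log\mathscr{G}^{(a)} = u_1 + u_2$ splits into $u_1 = \log p(X,Y,t-\tau)$, which depends only on $(X,t)$, and $u_2 = \log p^{(a)}(z,\zeta,t-\tau)$, which depends only on $(z,t)$. Since $\nabla_X u = \nabla_X u_1$ and $\partial_z u = \partial_z u_2$, the left-hand side of \eqref{LiYau-GKa} decouples as the sum of a \emph{Kolmogorov contribution} $\langle Q\nabla_X u_1,\nabla_X u_1\rangle + \langle BX,\nabla_X u_1\rangle - \partial_t u_1$ and a \emph{Bessel contribution} $(\partial_z u_2)^2 - \partial_t u_2$. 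I will show that the first equals exactly $\tfrac12\operatorname{tr}(Q C^{-1}(t-\tau))$, while the second is strictly less than $\tfrac{a+1}{2(t-\tau)}$ whenever $\zeta>0$; adding the two yields \eqref{LiYau-GKa}.

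For the Kolmogorov contribution, I use that $p(\cdot,Y,\cdot)$ solves $\mathscr{A}_X p = \partial_t p$ off its pole. Writing $p = e^{u_1}$ and expanding $\nabla_X^2 p = p(\nabla_X^2 u_1 + \nabla_X u_1\otimes\nabla_X u_1)$ gives the Bochner-type identity $\langle Q\nabla_X u_1,\nabla_X u_1\rangle + \langle BX,\nabla_X u_1\rangle - \partial_t u_1 = -\operatorname{tr}(Q\nabla_X^2 u_1)$. By the explicit Gaussian form \eqref{fundholp}, $u_1$ equals, modulo an $X$-independent term, the quadratic form $-\tfrac14\langle C^{-1}(t-\tau)(X-e^{-(t-\tau)B}Y),\,X-e^{-(t-\tau)B}Y\rangle$, whose $X$-Hessian is $-\tfrac12 C^{-1}(t-\tau)$; therefore the Kolmogorov contribution is exactly $\tfrac12\operatorname{tr}(Q C^{-1}(t-\tau))$.

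For the Bessel contribution, $p^{(a)}(\cdot,\zeta,\cdot)$ solves $\partial_t p^{(a)} = \Ba p^{(a)}$ on $\{z>0\}$, so writing $p^{(a)} = e^{u_2}$ gives $(\partial_z u_2)^2 - \partial_t u_2 = -\Ba u_2$. Introducing $w = \tfrac{z\zeta}{2(t-\tau)}$ and $h(w) = w^{-\frac{a-1}{2}}I_{\frac{a-1}{2}}(w)$, which is smooth and positive on $[0,\infty)$, we have $u_2 = \log h(w) - \tfrac{z^2}{4(t-\tau)} + (\text{$z$-independent terms})$. A direct computation, using $\tfrac{\zeta}{2(t-\tau)} = \tfrac wz$, gives
\[
-\Ba u_2 = \frac{a+1}{2(t-\tau)} - \frac{w}{z^2}\Big(w(\log h)''(w) + a(\log h)'(w)\Big).
\]
The key point is the modified Bessel ODE $wh'' + ah' = wh$, equivalent to the Bessel equation for $I_{\frac{a-1}{2}}$, which collapses the parenthesis to $w\bigl(1 - ((\log h)'(w))^2\bigr)$, whence
\[
-\Ba u_2 = \frac{a+1}{2(t-\tau)} - \frac{\zeta^2}{4(t-\tau)^2}\bigl(1 - r(w)^2\bigr),\qquad r(w) := (\log h)'(w) = \frac{I_{\frac{a+1}{2}}(w)}{I_{\frac{a-1}{2}}(w)}.
\]
Since $r$ solves the Riccati equation $r' = 1 - r^2 - \tfrac aw r$ with $r(0^+) = 0$, a comparison argument — inspecting the sign of $r'$ at a hypothetical first crossing of the levels $0$ and $1$, with the trivial case $a=0$ giving $r(w)=\tanh w$ — shows $0<r(w)<1$ for all $w>0$ when $a\ge 0$, so the subtracted term is strictly positive for $\zeta>0$. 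For the endpoint $z=0$ one lets $w\to 0$: then $r(0)=0$, so $\partial_z u_2$ vanishes at $z=0$, and evaluating $u_2$ directly at $z=0$ yields $-\partial_t u_2|_{z=0} = \tfrac{a+1}{2(t-\tau)} - \tfrac{\zeta^2}{4(t-\tau)^2}$, which is $<\tfrac{a+1}{2(t-\tau)}$ for any $\zeta>0$ and requires only $a>-1$; combined with the Kolmogorov contribution this gives the final assertion of the lemma.

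The main obstacle is the sharp two-sided bound $0< I_{\frac{a+1}{2}}(w)/I_{\frac{a-1}{2}}(w)<1$ for $w>0$: this is precisely where the hypothesis $a\ge 0$ enters, and it is what upgrades a soft ``$\le$'' into the strict inequality of \eqref{LiYau-GKa}; its degeneration as $w\to 0$ is also what makes the $z=0$ endpoint valid for all $a>-1$. Everything else — the Bochner identity, the Hessian of the Gaussian \eqref{fundholp}, and the Bessel-ODE simplification of $-\Ba u_2$ — is routine bookkeeping.
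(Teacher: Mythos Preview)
Your proof is correct and follows the same overall decomposition as the paper: split $u=\log p+\log p^{(a)}$ and handle the Kolmogorov and Bessel contributions separately, obtaining equality $\tfrac12\operatorname{tr}(QC^{-1}(t-\tau))$ for the former and the strict inequality $<\tfrac{a+1}{2(t-\tau)}$ for the latter.

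Where you diverge is in execution. For the Kolmogorov part, the paper computes $\nabla_X\log p$ and $\partial_t\log p$ term by term from \eqref{fundholp}, invoking \eqref{relogd} and \eqref{dinuovodert} to collapse everything to \eqref{LY:K} (an identity it attributes to \cite{CPP}). You instead use the ``Bochner'' shortcut $\langle Q\nabla u_1,\nabla u_1\rangle+\langle BX,\nabla u_1\rangle-\partial_t u_1=-\operatorname{tr}(Q\nabla_X^2u_1)$, which follows immediately from $\partial_t p=\sA p$ and reduces the question to reading off the Hessian of a single quadratic form; this is slicker and avoids the auxiliary identities. For the Bessel part, the paper simply cites \cite[Propositions 4.1, 4.2]{G18c} for \eqref{LY:Ba} and \eqref{LY:Ba22}, whereas you reprove those inequalities from scratch: the same Bochner trick turns $(\partial_z u_2)^2-\partial_t u_2$ into $-\Ba u_2$, and the Bessel ODE $wh''+ah'=wh$ together with the Riccati comparison $r'=1-r^2-\tfrac{a}{w}r$ yields $0<r(w)<1$ for $a\ge 0$. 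Your argument is thus more self-contained, while the paper's is shorter by outsourcing the Bessel estimate.
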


\begin{proof}
Recalling \eqref{P:Ga}, we have
\begin{align*}
& \left\langle Q\nabla_X u (X,t,z),\nabla_X u(X,t,z)\right\rangle + (\de_z u (X,t,z))^2 + \left\langle BX,\nabla_X u(X,t,z)\right\rangle -\de_t u(X,t,z)
\\
& = \left\langle Q\nabla_X \log p(X,Y,t-\tau),\nabla_X \log p(X,Y,t-\tau)\right\rangle + \left\langle BX,\nabla_X \log p(X,Y,t-\tau)\right\rangle 
\\
& -\de_t \log p(X,Y,t-\tau) + (\de_z \log p^{(a)}(z,\zeta,t-\tau))^2-\de_t \log p^{(a)}(z,\zeta,t-\tau).
\end{align*}
Moreover, from \cite[equation (4.4)]{GT}, we have
\begin{equation}\label{Xderp2}
\nabla_X \log p(X,Y,t-\tau)=-\frac{1}{2}C^{-1}(t-\tau)\left(X-e^{-(t-\tau)B}Y\right).
\end{equation}
Furthermore, \eqref{fundholp} gives
\begin{eqnarray}\label{detlog}
&&\de_t \log p(X,Y,t-\tau) = -\operatorname{tr}(B)-\frac{1}{2}\frac{d}{dt}\left(\log{\left( \operatorname{det}\left(C(t)\right)\right)}\right)\\
&+&\frac{1}{4}\left\langle C'(t-\tau)C^{-1}(t-\tau) \left(X-e^{-(t-\tau)B}Y\right),C^{-1}(t-\tau) \left(X-e^{-(t-\tau)B}Y\right)\right\rangle\notag\\
&-&\frac{1}{2}\left\langle Be^{-(t-\tau)B}Y,C^{-1}(t-\tau) \left(X-e^{-(t-\tau)B}Y\right)\right\rangle\notag\\
&=&-\frac{1}{2}\operatorname{tr}\left(Q C^{-1}(t-\tau)\right)-\frac{1}{2}\left\langle BX, C^{-1}(t-\tau)\left(X-e^{-(t-\tau)B}Y\right)\right\rangle\notag\\
&+&\frac{1}{4}\left\langle QC^{-1}(t-\tau) \left(X-e^{-(t-\tau)B}Y\right),C^{-1}(t-\tau) \left(X-e^{-(t-\tau)B}Y\right)\right\rangle,
\notag\end{eqnarray}
where in the last equality we have used \eqref{relogd} and \eqref{dinuovodert}.
From \eqref{Xderp2} and \eqref{detlog} we deduce
\begin{eqnarray}\label{LY:K}
&&\left\langle Q\nabla_X \log p(X,Y,t-\tau),\nabla_X \log p(X,Y,t-\tau)\right\rangle + \\
&&+\left\langle BX,\nabla_X \log p(X,Y,t-\tau)\right\rangle -\de_t \log p(X,Y,t-\tau)=\frac{1}{2}\operatorname{tr}\left(Q C^{-1}(t-\tau)\right).\nonumber
\end{eqnarray}
We mention that the equation \eqref{LY:K} was first established in the proof of \cite[Proposition 6]{CPP}.
On the other hand, it is proved in \cite[Proposition 4.2]{G18c} that, for $a\geq 0$, one has for any $z, \zeta>0$ and $\tau<t$,
\begin{equation}\label{LY:Ba}
(\de_z \log p^{(a)}(z,\zeta,t-\tau))^2-\de_t \log p^{(a)}(z,\zeta,t-\tau)<\frac{a+1}{2(t-\tau)}.
\end{equation}
Adding \eqref{LY:K} and \eqref{LY:Ba} we obtain \eqref{LiYau-GKa} as desired.

The final statement of the theorem follows from the observation that when $z = 0$, then in \cite[Proposition 4.1, (4.3)]{G18c} it is shown that for any $a>-1$, one has
\begin{equation}\label{LY:Ba22}
(\de_z \log p^{(a)}(0,\zeta,t-\tau))^2-\de_t \log p^{(a)}(0,\zeta,t-\tau)<\frac{a+1}{2(t-\tau)},
\end{equation}
for any $\zeta>0$ and $\tau<t$. 
If we now add \eqref{LY:K} and \eqref{LY:Ba22} we obtain the sought for conclusion \eqref{LiYau-GKa} with $z = 0$, for any $a>-1$.
\end{proof}

\begin{proposition}[Li-Yau inequality for $\mathscr P^{(a)}_t$]\label{LYKaPt}
Let $\vf\in C_0^{\infty}(\Rnp)$ be such that $\vf\geq 0$ and not identically vanishing. For all $X\in\RN$, $t>0$, and $z>0$, we have for every $a\ge 0$, 
\begin{align}\label{LiYau-Pta}
& \left\langle Q\nabla_X \log{\mathscr P}^{(a)}_t\vf (X,z) ,\nabla_X \log{\mathscr P}^{(a)}_t\vf (X,z)\right\rangle + (\de_z \log{\mathscr P}^{(a)}_t\vf (X,z))^2 \\
& + \left\langle BX,\nabla_X \log{\mathscr P}^{(a)}_t\vf (X,z)\right\rangle -\de_t \log{\mathscr P}^{(a)}_t\vf (X,z) < \frac{1}{2}\operatorname{tr}\left(Q C^{-1}(t)\right)+\frac{a+1}{2t}.
\notag
\end{align}
If, instead, $z = 0$, then the inequality \eqref{LiYau-Pta} continue to be valid for any $a>-1$.
\end{proposition}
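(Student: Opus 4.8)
The plan is to derive the Li-Yau inequality \eqref{LiYau-Pta} for the semigroup $\mathscr P^{(a)}_t\varphi$ from the pointwise estimate \eqref{LiYau-GKa} for the Neumann fundamental solution $\mathscr G^{(a)}$, using a maximum-principle / convexity argument along the heat flow, exactly in the spirit of how sharp Li-Yau estimates for positive solutions are propagated from the kernel case. More precisely, write $u(X,t,z) = \log \mathscr P^{(a)}_t\varphi(X,z)$ and introduce the Li-Yau quantity
\[
F(X,t,z) = \langle Q\nabla_X u,\nabla_X u\rangle + (\partial_z u)^2 + \langle BX,\nabla_X u\rangle - \partial_t u.
\]
The goal is to show $F < \frac{1}{2}\operatorname{tr}(Q C^{-1}(t)) + \frac{a+1}{2t}$. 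First I would record that, by \eqref{howtodef}, $v := \mathscr P^{(a)}_t\varphi$ is a positive solution of $\mathscr K_a v = 0$ (equivalently $\mathscr K v + \Ba v = 0$) in $\Rnp\times(0,\infty)$ with the Neumann condition $\lim_{z\to 0^+} z^a\partial_z v = 0$, and that by \eqref{P:Ga} it is an average of the kernels $\mathscr G^{(a)}(X,t,z;Y,0,\zeta)$ against the positive measure $\varphi(Y,\zeta)\zeta^a\,dY\,d\zeta$.

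The cleanest route is the representation/Jensen approach rather than a Bochner computation. For a positive solution $v$, the function $w = -\log v = -u$ satisfies a nonlinear equation; one shows that the map $v\mapsto$ (the Li-Yau defect) is such that $F$ is dominated by its ``kernel values''. Concretely, I would argue as follows. For fixed $(X,t,z)$ the inequality \eqref{LiYau-GKa} says that for each $(Y,\zeta)$,
\[
\langle Q\nabla_X g,\nabla_X g\rangle + (\partial_z g)^2 + \langle BX,\nabla_X g\rangle - \partial_t g \;<\; \frac12\operatorname{tr}(QC^{-1}(t)) + \frac{a+1}{2t},
\]
where $g = g_{Y,\zeta} = \log \mathscr G^{(a)}(X,t,z;Y,0,\zeta)$. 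Writing $v = \int e^{g_{Y,\zeta}}\,d\mu(Y,\zeta)$ with $d\mu = \varphi\,\zeta^a\,dY\,d\zeta$, one has $\nabla_X u = v^{-1}\int e^{g}\nabla_X g\,d\mu$, and similarly for $\partial_z u$ and $\partial_t u$; thus the linear terms $\langle BX,\nabla_X u\rangle - \partial_t u$ are exactly the $e^g\,d\mu$-weighted averages of the corresponding linear terms in $g$. For the quadratic terms, Cauchy-Schwarz with respect to the probability measure $d\nu = v^{-1}e^g\,d\mu$ gives
\[
\langle Q\nabla_X u,\nabla_X u\rangle + (\partial_z u)^2 = |(\textstyle\int Q^{1/2}\nabla_X g\,d\nu, \int \partial_z g\,d\nu)|^2 \le \int\!\big(\langle Q\nabla_X g,\nabla_X g\rangle + (\partial_z g)^2\big)\,d\nu,
\]
so that $F(X,t,z) \le \int \big(\text{Li-Yau defect of }g_{Y,\zeta}\big)\,d\nu(Y,\zeta)$, and since each integrand is strictly bounded above by the right-hand side of \eqref{LiYau-Pta} and $\nu$ is a probability measure, the desired strict inequality follows. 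The case $z = 0$ is identical, invoking the final statement of Lemma \ref{LYKa} which extends \eqref{LiYau-GKa} to all $a>-1$ when $z=0$; one uses continuity up to $z=0$ of $\mathscr P^{(a)}_t\varphi$ and the identity $\lim_{z\to0^+}z^a\partial_z v = 0$ to justify differentiating under the integral and the limit $z\to 0^+$ there.

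The main obstacle I expect is \textbf{justifying the differentiation under the integral sign and the Cauchy-Schwarz step rigorously near $z = 0$ and near $t = 0$}, i.e. controlling the decay of $\mathscr G^{(a)}$, its $X$-, $z$- and $t$-derivatives, and the Bessel-function factor $I_{(a-1)/2}(z\zeta/2t)$, uniformly enough to interchange limits and to guarantee that $v$ is smooth and strictly positive (which is where $\varphi\ge 0$, $\varphi\not\equiv 0$, $\varphi\in C_0^\infty$ are used). The strictness of the final inequality also needs a small remark: since $\varphi$ is compactly supported and $\mathscr G^{(a)}>0$ everywhere, the measure $\nu$ has full support, and no single $g_{Y,\zeta}$ can make the averaged defect equal to the bound (the bound is a strict supremum over $(Y,\zeta)$), so the inequality stays strict after averaging. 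Once these analytic technicalities are in place, the algebra is just Cauchy-Schwarz plus linearity of the average, together with the already-established kernel inequality \eqref{LiYau-GKa}.
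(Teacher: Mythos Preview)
Your proposal is correct and follows essentially the same route as the paper: the paper differentiates under the integral sign (using $\varphi\in C_0^\infty$), applies Cauchy--Schwarz to bound $\langle Q\nabla_X v,\nabla_X v\rangle + (\partial_z v)^2$ by $\mathscr P^{(a)}_t\varphi$ times the integral of the corresponding quadratic expression in $\log\mathscr G^{(a)}$, and then invokes the kernel inequality \eqref{LiYau-GKa} pointwise in $(Y,\zeta)$ before dividing by $(\mathscr P^{(a)}_t\varphi)^2$. Your formulation via the probability measure $d\nu = v^{-1}\mathscr G^{(a)}\varphi\,\zeta^a\,dY\,d\zeta$ and Jensen/Cauchy--Schwarz is just a tidier packaging of the same computation.
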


\begin{proof}
Since $\vf \in C_0^{\infty}(\Rnp)$, we can differentiate ${\mathscr P}^{(a)}_t\vf (X,z)$ under the integral sign around any $(X,t,z)\in \Rnp\times(0,\infty)$. This gives
\begin{eqnarray*}
&&\left\langle Q\nabla_X {\mathscr P}^{(a)}_t\vf (X,z) ,\nabla_X {\mathscr P}^{(a)}_t\vf (X,z)\right\rangle + (\de_z {\mathscr P}^{(a)}_t\vf (X,z))^2 \\
&=&\sum_{j=1}^N\left(\int_{\RN\times\R^+}{\vf(Y,\zeta)\zeta^a\left(Q^{\frac{1}{2}}\nabla_X\right)_j\mathscr G^{(a)}(X,t,z;Y,0,\zeta)\,{\rm d}Y\,{\rm d}\zeta}\right)^2 \\ 
&+&\left(\int_{\RN\times\R^+}{\vf(Y,\zeta)\zeta^a\de_z\mathscr G^{(a)}(X,t,z;Y,0,\zeta)\,{\rm d}Y\,{\rm d}\zeta}\right)^2
\\
&\leq& {\mathscr P}^{(a)}_t\vf (X,z)\int_{\RN\times\R^+}{\vf(Y,\zeta)\zeta^a\frac{\left\langle Q\nabla_X \mathscr G^{(a)}(X,t,z;Y,0,\zeta) ,\nabla_X \mathscr G^{(a)}(X,t,z;Y,0,\zeta)\right\rangle}{\mathscr G^{(a)}(X,t,z;Y,0,\zeta)}\,{\rm d}Y\,{\rm d}\zeta}\\
&+&{\mathscr P}^{(a)}_t\vf (X,z)\int_{\RN\times\R^+}{\vf(Y,\zeta)\zeta^a\frac{\left(\de_z \mathscr G^{(a)}(X,t,z;Y,0,\zeta)\right)^2}{\mathscr G^{(a)}(X,t,z;Y,0,\zeta)}\,{\rm d}Y\,{\rm d}\zeta}\\
&<& \left(\frac{1}{2}\operatorname{tr}\left(Q C^{-1}(t)\right)+\frac{a+1}{2t}\right)\left({\mathscr P}^{(a)}_t\vf (X,z)\right)^2 - {\mathscr P}^{(a)}_t\vf (X,z) \left\langle BX,\nabla_X {\mathscr P}^{(a)}_t\vf (X,z)\right\rangle \\
&+&{\mathscr P}^{(a)}_t\vf (X,z) \de_t {\mathscr P}^{(a)}_t\vf (X,z).
\end{eqnarray*}
We note that, in the last inequality, the Li-Yau type inequality \eqref{LiYau-GKa} of the previous lemma is used in a crucial way. The inequality \eqref{LiYau-Pta} is now obtained by rearranging terms, and dividing by $\left({\mathscr P}^{(a)}_t\vf (X,z)\right)^2$.

The second part of the statement of the proposition follows in a similar fashion by appealing to the second part of Lemma \ref{LYKa}.

\end{proof}

We are now ready to prove the desired Harnack inequality.

\begin{theorem}[Sharp Harnack inequality]\label{shth} Let $a \geq 0$, and let $\vf\geq 0$ such that $\vf\in C_0^{\infty}(\Rnp)$. For $X,Y\in\RN, z,\zeta>0$ and $0 < s < t < \infty$, we have
\begin{eqnarray}\label{harnack}
{\mathscr P}^{(a)}_s\vf (Y,\zeta)\leq {\mathscr P}^{(a)}_t\vf (X,z) \left(\frac{t}{s}\right)^{\frac{N+a+1}{2}}\left(\frac{\operatorname{det}\left(K(t)\right)}{\operatorname{det}\left(K(s)\right)}\right)^{\frac{1}{2}}\exp\left(\frac{|z-\zeta|^2}{4(t-s)}\right)\cdot&& \\
\cdot\exp\left(\frac{1}{4}\left\langle C^{-1}(t-s)\left(X-e^{-(t-s)B}Y\right),\left(X-e^{-(t-s)B}Y\right)\right\rangle\right).&& \nonumber
\end{eqnarray}
When $z=\zeta=0$ the inequality is valid for every $a>-1$, and reads 
\begin{eqnarray}\label{harnack0}
{\mathscr P}^{(a)}_s\vf (Y,0)\leq {\mathscr P}^{(a)}_t\vf (X,0) \left(\frac{t}{s}\right)^{\frac{N+a+1}{2}}\left(\frac{\operatorname{det}\left(K(t)\right)}{\operatorname{det}\left(K(s)\right)}\right)^{\frac{1}{2}}\cdot&& \\
\cdot\exp\left(\frac{1}{4}\left\langle C^{-1}(t-s)\left(X-e^{-(t-s)B}Y\right),\left(X-e^{-(t-s)B}Y\right)\right\rangle\right).&& \nonumber
\end{eqnarray}
\end{theorem}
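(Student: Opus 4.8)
The plan is to integrate the Li--Yau inequality of Proposition \ref{LYKaPt} along a suitable space-time path joining the two points $(Y,0,s)$ and $(X,0,t)$ in the region $\Rnp\times(0,\infty)$, exactly in the spirit of the classical Li--Yau argument. First I would set $u = \log {\mathscr P}^{(a)}_t\vf$, which is well-defined and smooth since $\vf\ge 0$ is not identically zero and ${\mathscr P}^{(a)}_t\vf>0$ by the strict positivity of the kernel $\mathscr G^{(a)}$. The target inequality \eqref{harnack} is equivalent, after taking logarithms, to a bound on $u(Y,\zeta,s) - u(X,z,t)$ by the sum of the three ``cost'' terms on the right-hand side. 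So I would choose a curve $\gamma(\sigma) = (X(\sigma), z(\sigma), t(\sigma))$, $\sigma\in[0,1]$, with $\gamma(0) = (X,z,t)$ and $\gamma(1) = (Y,\zeta,s)$, and estimate $\frac{d}{d\sigma} u(\gamma(\sigma))$ from above using the chain rule together with Proposition \ref{LYKaPt}.

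The key computation is the following: along such a curve,
\[
\frac{d}{d\sigma} u(\gamma(\sigma)) = \langle \nabla_X u, X'\rangle + (\de_z u) z' + (\de_t u) t'.
\]
The natural choice is $t(\sigma)$ monotone decreasing from $t$ to $s$ (so $t' < 0$), and then to complete the square one wants $X'$ and $z'$ proportional to $t'$ with the right weights. Using $ab \le \varepsilon a^2 + \frac{1}{4\varepsilon} b^2$ type inequalities — more precisely the inequality $\langle \nabla_X u, X'\rangle \le \frac{|t'|}{2t'' }\langle Q\nabla_X u,\nabla_X u\rangle + \dots$ with a careful choice of the $Q$-weighted splitting, plus the analogous scalar estimate for the $z$ term, and absorbing the drift term $\langle BX,\nabla_X u\rangle$ — one reduces the whole expression to $t'$ times the left-hand side of \eqref{LiYau-Pta} (which is $\ge \langle Q\nabla_X u,\nabla_X u\rangle + (\de_z u)^2 + \langle BX,\nabla_X u\rangle - \de_t u$ up to sign, since $t'<0$ flips the inequality) plus an explicit ``geometric'' remainder coming from the square-completion. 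Then I would invoke Proposition \ref{LYKaPt} to bound that bracket by $\frac12\operatorname{tr}(QC^{-1}(t(\sigma))) + \frac{a+1}{2t(\sigma)}$, and integrate in $\sigma$.

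The outcome after integration is three contributions. The term $\int \frac{a+1}{2t(\sigma)}|t'(\sigma)|\,d\sigma$ yields, with $t(\sigma)$ chosen appropriately, the factor $\left(\frac ts\right)^{\frac{a+1}{2}}$; the term $\frac12\int \operatorname{tr}(QC^{-1}(t(\sigma)))|t'|\,d\sigma$ yields $\left(\frac{\det(tK(t))}{\det(sK(s))}\right)^{1/2} = \left(\frac ts\right)^{N/2}\left(\frac{\det K(t)}{\det K(s)}\right)^{1/2}$ by the identity \eqref{relogd} $\operatorname{tr}(QC^{-1}(t)) = \frac{d}{dt}\log\det(tK(t))$ — this is where that lemma gets used; and the square-completion remainder produces exactly $\frac{|z-\zeta|^2}{4(t-s)}$ together with $\frac14\langle C^{-1}(t-s)(X-e^{-(t-s)B}Y), X-e^{-(t-s)B}Y\rangle$, upon optimising the straight-line-type path in the $(X,z)$ variables against the time variable. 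The case $z=\zeta=0$ is handled identically, using the last part of Proposition \ref{LYKaPt} valid for all $a>-1$, and dropping the (vanishing) $z$-contribution, which gives \eqref{harnack0}.

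The main obstacle I anticipate is the precise algebraic bookkeeping of the square-completion in the $X$-variable, because the cost must come out as the \emph{non-symmetric} quadratic form $\langle C^{-1}(t-s)(X-e^{-(t-s)B}Y),\cdot\rangle$ rather than something involving $C^{-1}$ evaluated along the path; getting this requires choosing the path $X(\sigma)$ so that $X(\sigma) - e^{-(t(\sigma)-s)B}Y$ interpolates linearly (or so that $e^{t(\sigma)B}X(\sigma)$ is affine in a reparametrised time), and checking that the drift term $\langle BX,\nabla_X u\rangle$ is precisely absorbed by the $\sigma$-derivative of $e^{t(\sigma)B}X(\sigma)$. Equivalently — and this may be the cleanest route — one conjugates the whole problem by the Lie-group translation that makes the kernel look like \eqref{fundholp}, runs the Li--Yau integration for the ``constant-coefficient'' picture where the optimal path is literally a straight line in an exponentially-rescaled variable, and translates back. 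I would also need to double-check that the strict inequality in \eqref{LiYau-Pta} is not lost (it becomes non-strict after integration, consistent with \eqref{harnack}) and that all integrals converge at the endpoint $t(\sigma)\to s>0$, which is immediate since $s>0$.
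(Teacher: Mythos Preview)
Your overall strategy is exactly the paper's: integrate the Li--Yau inequality \eqref{LiYau-Pta} along a path from $(X,z,t)$ to $(Y,\zeta,s)$, use Cauchy--Schwarz/Young to complete squares, and identify the three cost terms via \eqref{relogd}. You have also correctly isolated the one genuine difficulty, namely the choice of the $X$-path so that the drift term $\langle BX,\nabla_X u\rangle$ is absorbed and the remaining cost comes out as $\langle C^{-1}(t-s)(X-e^{-(t-s)B}Y),\,\cdot\,\rangle$.

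However, your two guesses for that path --- ``$X(\sigma)-e^{-(t(\sigma)-s)B}Y$ linear'' or ``$e^{t(\sigma)B}X(\sigma)$ affine'' --- do not produce the right answer; neither yields a velocity decomposition of the form $\gamma'=B\gamma+Q^{1/2}\omega$ with $\int|\omega|^2$ equal to the desired quadratic form (the second fails already because $C(\tau)$ is not linear in $\tau$ unless $B=0$). The paper resolves this by taking the \emph{optimal control curve}
\[
\gamma(\tau)=e^{\tau B}\Bigl(X-C(\tau)\,C^{-1}(t-s)\bigl(X-e^{-(t-s)B}Y\bigr)\Bigr),\qquad \tau\in[0,t-s],
\]
together with the linear path $z(\tau)=z-\tfrac{\tau}{t-s}(z-\zeta)$ and time $t-\tau$. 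One checks directly that $\gamma'(\tau)=B\gamma(\tau)+Q^{1/2}\omega(\tau)$ with $\omega(\tau)=-Q^{1/2}e^{-\tau B^\star}C^{-1}(t-s)(X-e^{-(t-s)B}Y)$, and then $\int_0^{t-s}|\omega(\tau)|^2\,d\tau$ equals exactly $\langle C^{-1}(t-s)(X-e^{-(t-s)B}Y),X-e^{-(t-s)B}Y\rangle$ by the very definition of $C$. With this curve the chain rule gives $\langle\nabla_X\log u,\gamma'\rangle=\langle Q^{1/2}\nabla_X\log u,\omega\rangle+\langle B\gamma,\nabla_X\log u\rangle$, so the drift term is absorbed for free and a single Cauchy--Schwarz plus Young ($ab\le \tfrac14 a^2+b^2$) feeds directly into \eqref{LiYau-Pta}. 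After that, your outline (integrate $\tfrac12\operatorname{tr}(QC^{-1})$ via \eqref{relogd}, integrate $\tfrac{a+1}{2(t-\tau)}$, and pick up $\tfrac{|z-\zeta|^2}{4(t-s)}$ from the $z$-path) is exactly what the paper does. The $z=\zeta=0$ case is handled as you say.
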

\begin{proof}
We can assume $\vf\not\equiv 0$ (otherwise we have nothing to prove), and denote $u(X,t,z)= {\mathscr P}^{(a)}_t\vf (X,z)$. Let us fix $X,Y\in\RN, z,\zeta>0$ and $0 < s < t < \infty$. We are going to choose an optimal curve joining $(X,t,z)$ and $(Y,s,\zeta)$. Let us consider the curve
$$\eta(\tau)=\left(\gamma(\tau),t-\tau, z-\frac{\tau}{t-s}(z-\zeta)\right),\qquad\mbox{for }\tau\in[0,t-s],$$
where $\gamma$ is the smooth curve in $\RN$ defined by
$$
\gamma(\tau)=e^{\tau B}\left(X-C(\tau)C^{-1}(t-s)\left( X-e^{-(t-s)B}Y \right)\right).
$$
We then have
$$\eta(0)=(X,t,z),\qquad\eta(t-s)=(Y,s,\zeta),\qquad\eta(\tau)\in \RN\times\R^+\times\R^+ \quad\forall\tau\in[0,t-s].$$
Moreover, if we denote by
\begin{equation}\label{defomega}
\omega(\tau)=-Q^{\frac{1}{2}}e^{-\tau B^\star}C^{-1}(t-s)\left( X-e^{-(t-s)B}Y \right)
\end{equation}
and recalling that $C'(\tau)=e^{-\tau B}Qe^{-\tau B^\star}$, we have
\begin{equation}\label{etaprimo}
\eta'(\tau)=\left( B\gamma(\tau) + Q^{\frac{1}{2}}\omega(\tau), -1, \frac{\zeta-z}{t-s}\right).
\end{equation}
We also note that, from the definitions \eqref{defomega} and \eqref{Ct}, we find
\begin{eqnarray}\label{costomega}
&&\int_0^{t-s}|\omega(\tau)|^2\,{\rm d}\tau\\
&=&\int_0^{t-s}\left\langle e^{-\tau B}Qe^{-\tau B^\star} C^{-1}(t-s)\left( X-e^{-(t-s)B}Y \right), C^{-1}(t-s)\left( X-e^{-(t-s)B}Y \right) \right\rangle\,{\rm d}\tau \nonumber \\
&=& \left\langle C^{-1}(t-s)\left(X-e^{-(t-s)B}Y\right),\left(X-e^{-(t-s)B}Y\right)\right\rangle.\nonumber
\end{eqnarray}
The optimality we have claimed for the curve $\gamma$ consists in the following: among the curves in $[0,t-s]$ joining $X$ and $Y$ which are admissible for the control problem related to $\K$ (i.e. $\gamma'(\tau)=B\gamma(\tau) + Q^{\frac{1}{2}}\tilde{\omega}(\tau)$), the particular one we have chosen minimizes the cost $\int_0^{t-s}|\tilde{\omega}(\tau)|^2\,{\rm d}\tau$. We refer the reader to \cite[Section 6]{DiP} and \cite{BP} (and references therein) for further details.
\\
Denoting $h(\tau)=\log u(\eta(\tau))$ and using \eqref{etaprimo}, we then obtain
\begin{eqnarray*}
&&\log\frac{u(Y,s,\zeta)}{u(X,t,z)}=h(t-s)-h(0)=\int_0^{t-s}h'(\tau)\,{\rm d}\tau\\
&=&\int_0^{t-s}\left\langle \nabla_X\log u(\eta(\tau)), \gamma'(\tau)\right\rangle\,{\rm d}\tau - \int_0^{t-s}\de_t\log u(\eta(\tau))\,{\rm d}\tau + \frac{\zeta-z}{t-s}\int_0^{t-s}\de_z\log u(\eta(\tau))\,{\rm d}\tau\\
&=&\int_0^{t-s}\left\langle Q^{\frac{1}{2}}\nabla_X\log u(\eta(\tau)), \omega(\tau)\right\rangle\,{\rm d}\tau + \int_0^{t-s}\left\langle B\gamma(\tau),\nabla_X\log u(\eta(\tau))\right\rangle\,{\rm d}\tau\\
&-& \int_0^{t-s}\de_t\log u(\eta(\tau))\,{\rm d}\tau + \frac{\zeta-z}{t-s}\int_0^{t-s}\de_z\log u(\eta(\tau))\,{\rm d}\tau\\
&\leq& \left(\int_0^{t-s}|\omega(\tau)|^2\,{\rm d}\tau\right)^{\frac{1}{2}}\left(\int_0^{t-s}\left\langle Q\nabla_X\log u(\eta(\tau)), \nabla_X\log u(\eta(\tau))\right\rangle\,{\rm d}\tau\right)^{\frac{1}{2}}\\
&+& \frac{|z-\zeta|}{\sqrt{t-s}}\left(\int_0^{t-s}\left(\de_z\log u(\eta(\tau))\right)^2\,{\rm d}\tau\right)^{\frac{1}{2}} \\
&+&\int_0^{t-s}\left\langle B\gamma(\tau),\nabla_X\log u(\eta(\tau))\right\rangle\,{\rm d}\tau - \int_0^{t-s}\de_t\log u(\eta(\tau))\,{\rm d}\tau \\
&\leq& \frac{1}{4}\int_0^{t-s}|\omega(\tau)|^2\,{\rm d}\tau + \frac{1}{4}\frac{|z-\zeta|^2}{t-s}+ \int_0^{t-s}\left\langle Q\nabla_X\log u(\eta(\tau)), \nabla_X\log u(\eta(\tau))\right\rangle\,{\rm d}\tau \\
&+& \int_0^{t-s}\left(\de_z\log u(\eta(\tau))\right)^2\,{\rm d}\tau +\int_0^{t-s}\left\langle B\gamma(\tau),\nabla_X\log u(\eta(\tau))\right\rangle\,{\rm d}\tau - \int_0^{t-s}\de_t\log u(\eta(\tau))\,{\rm d}\tau.
\end{eqnarray*}
We are now in position to apply \eqref{LiYau-Pta} (computed in fact at $\eta(\tau)$) and to deduce, by using also \eqref{costomega} and \eqref{relogd}, the following
\begin{eqnarray*}
&&\log\frac{u(Y,s,\zeta)}{u(X,t,z)}\leq \frac{1}{4}\int_0^{t-s}|\omega(\tau)|^2\,{\rm d}\tau + \frac{1}{4}\frac{|z-\zeta|^2}{t-s}+ \frac{1}{2}\int_0^{t-s}\left(\operatorname{tr}\left(Q C^{-1}(t-\tau)\right)+\frac{a+1}{t-\tau}\right)\,{\rm d}\tau \\
&=&\frac{1}{4}\left\langle C^{-1}(t-s)\left(X-e^{-(t-s)B}Y\right),\left(X-e^{-(t-s)B}Y\right)\right\rangle + \frac{1}{4}\frac{|z-\zeta|^2}{t-s}\\
&+&\frac{1}{2}\int_0^{t-s}\left(-\frac{d}{d\tau}\left(\log{ \left(\operatorname{det}\left((t-\tau)K_{t-\tau}\right)\right)}\right)+\frac{a+1}{t-\tau}\right)\,{\rm d}\tau\\
&=&\frac{1}{4}\left\langle C^{-1}(t-s)\left(X-e^{-(t-s)B}Y\right),\left(X-e^{-(t-s)B}Y\right)\right\rangle + \frac{1}{4}\frac{|z-\zeta|^2}{t-s}\\
&+&\frac{1}{2}\log{\left( \frac{\operatorname{det}\left(tK(t)\right)}{\operatorname{det}\left(sK(s)\right)}\right)}+\frac{a+1}{2}\log{\left( \frac{t}{s}\right)}.
\end{eqnarray*}
Noticing that $\operatorname{det}\left(tK(t)\right)=t^N\operatorname{det}\left(K(t)\right)$ and exponentiating both side of the previous inequality, we finally reach \eqref{harnack}.\\
In order to prove \eqref{harnack0}, we just mention that the curve $\eta(\tau)$ becomes $(\gamma(\tau),t-\tau,0)$ since $z=\zeta=0$. Following the proof of the first part, we thus need to apply \eqref{LiYau-Pta} computed only at $z=0$: that this can be done for any $a>-1$ follows from the second part of Proposition \ref{LYKaPt}.

\end{proof}

\begin{remark}
We would like to comment about the sharpness of the Harnack estimate in Theorem \ref{shth}. For $\ve>0$, consider the function
$$u(X,t,z)=\mathscr G^{(a)}(X,t,z;0,-\ve,0).$$
For $0<s<t$, $X\in\RN$, and $z,\zeta\in\R^+$, by definition we have
$$\frac{u(0,s,\zeta)}{u(X,t,z)}=\left(\frac{t+\ve}{s+\ve}\right)^{\frac{N+a+1}{2}}e^{\frac{z^2}{4(t+\ve)}-\frac{\zeta^2}{4(s+\ve)}}\left(\frac{\operatorname{det}\left(K(t+\ve)\right)}{\operatorname{det}\left(K(s+\ve)\right)}\right)^{\frac{1}{2}}\exp\left(\frac{1}{4}\left\langle C^{-1}(t+\ve)X,X\right\rangle\right).$$
As $z,\zeta,\ve$ tend to $0^+$, the expression in the right-hand side approaches the bound in \eqref{harnack}. We remark that, in view of \cite[Proposition 3.6]{GT}, we can write
$$u(X,t,z)={\mathscr P}^{(a)}_t\left(\mathscr G^{(a)}(\cdot,\ve,\cdot;0,0,0)\right) (X,z).$$
We note that, even if not in $C_0^{\infty}(\Rnp)$, the function $\mathscr G^{(a)}(X,\ve,z;0,0,0)$ can be monotonically approximated with suitable cut-off functions whose supports exhaust the whole space.
\end{remark}


\bibliographystyle{amsplain}

\end{document}